\documentclass[12pt,reqno]{amsart}
\usepackage{amsmath,amsthm}
\usepackage{caption}
\usepackage{mathtools}
\usepackage{cite}
\usepackage{enumitem}
\usepackage{color}
\usepackage{url}
\usepackage[usenames,dvipsnames]{xcolor}
\usepackage{graphicx}
\usepackage{xspace}
\usepackage[unicode=true,colorlinks]{hyperref}
\hypersetup{
linkcolor=blue,
citecolor=blue,
}
\usepackage{cleveref}
\usepackage{leftidx}
\usepackage[margin=1.5in]{geometry}
\usepackage{comment}
%\usepackage[curve]{xypic}

%\usepackage{tikz}
%\usetikzlibrary{matrix,arrows}

%\usepackage{amsrefs}

\usepackage{mathrsfs}

%		usage:
%	\xynote{stuff} will produce a note with stuff by author xy
% e.g.	\sfnote{stuff}

%	hyperref and setup
%	Colors can be changed
%\usepackage{hyperref}
%\usepackage{color}
%\definecolor{darkred}{rgb}{0.4,0,0}
%\definecolor{darkgreen}{rgb}{0,0.5,0}
%\definecolor{darkblue}{rgb}{0,0,0.4}
%
\begin{comment}
\hypersetup{
pdftitle={PDFTitle},		% title
pdfauthor={PY},	% author
pdfsubject={math},		% subject of the document
pdfkeywords={equidistribution, homogeneous space, Dirichlet improvability},	% list of keywords
pdfnewwindow=true,		% links in new window
colorlinks=true,		% false: boxed links; true: colored links
linkcolor=darkblue,		% color of internal links (change box color with linkbordercolor)
citecolor=darkred,		% color of links to bibliography
filecolor=darkblue,		% color of file links
urlcolor=darkblue,		% color of external links
pdfborder={0 0 0},
breaklinks=true
}
\end{comment}

%	Some shortcuts

\newcommand{\SL}{\operatorname{SL}}

\newcommand{\Gr}{\operatorname{Gr}}
\usepackage{amssymb}

%	Making amsart paragraphs bold
\makeatletter
\renewcommand{\paragraph}{%
	\@startsection {paragraph}{4}
	{\z@} \z@ {-\fontdimen 2\font }\bfseries
}
\makeatother

%-----------------------------------------------------------------

%%%%%%%%%%%%%%%%%%% 
% CITATIONS IN BOLD
%%%%%%%%%%%%%%%%%%%
\makeatletter % makes "@" a normal character so that it can be used in macros
\def\@cite#1#2{{\m@th\upshape\bfseries%
		[{#1\if@tempswa{\m@th\upshape\mdseries, #2}\fi}]}}
\makeatother % returns "@" to its normal state

%%%%%%%%%%%%%%%%%%% 
% THEOREMS
%%%%%%%%%%%%%%%%%%%

% Equations share a numbering scheme with environments.
% I find it easier to  follow this way.
% To change back: 
%		change \numberwithin{equation}{section} to whichever is prefered
%		change \newtheorem{thm} to be numbered within section, not equation

\numberwithin{equation}{section}

% Normal style theorems, numbered within the section
\theoremstyle{plain}
\newtheorem{thm}{Theorem}[section]
\newtheorem{cor}[thm]{Corollary}
\newtheorem{prop}[thm]{Proposition}
\newtheorem{lem}[thm]{Lemma}

% Theorem style with roman text, numbered within section
\theoremstyle{definition}

% Theorem style with roman text, numbered within section
\theoremstyle{remark}
\newtheorem{remark}[thm]{Remark}

% EQUATION NUMBERING
%\numberwithin{equation}{subsection}
% CAPTION NUMBERING
\captionsetup{figurewithin=section}

%%%%%%%%%%%%%%%%%%% 
% EMPHASIZED PARAGRAPH LABEL
%%%%%%%%%%%%%%%%%%%
%\newcommand{\bold}[1]{\medskip \noindent {\bf #1 }\nopagebreak}
%       Must be renewed to use with amslatex

%%%%%%%%%%%%%%%%%%% 
% NEW COMMANDS
%%%%%%%%%%%%%%%%%%%

% other 

% annotation
\newcommand{\ann}[1]{}
%\newcommand{\ann}[1]{\marginpar{\tiny{#1}}}
% emphbold

%%%%%%%%%%%%%%%%%%% 
% STANDARD FONTS
%%%%%%%%%%%%%%%%%%%
% mathbf

\newcommand\bfp{\mathbf{p}}
\newcommand\bfq{\mathbf{q}}
\newcommand\bfs{\mathbf{s}}

\newcommand\bfx{\mathbf{x}}
% mathbb

\newcommand\bbP{\mathbb{P}}

\newcommand\bbR{\mathbb{R}}

\newcommand\bbZ{\mathbb{Z}}
% mathcal

\newcommand\cI{\mathcal{I}}
\newcommand{\cJ}{\mathcal{J}} 

\newcommand\cL{\mathcal{L}}

%mathfrak

%rings&fields
\newcommand{\R}{\mathbb{R}}

\newcommand{\Q}{\mathbb{Q}}
\newcommand{\Z}{\mathbb{Z}}
\newcommand{\N}{\mathbb{N}}

%%%%%%%%%%%%%%%%%%% 
% ABSOLUTE VALUE AND NORM
%%%%%%%%%%%%%%%%%%%
\DeclarePairedDelimiter\abs{\lvert}{\rvert}%
\DeclarePairedDelimiter\norm{\lVert}{\rVert}%

% Swap the definition of \abs* and \norm*, so that \abs
% and \norm resizes the size of the brackets, and the 
% starred version does not.
\makeatletter
\let\oldabs\abs
\def\abs{\@ifstar{\oldabs}{\oldabs*}}
\let\oldnorm\norm
\def\norm{\@ifstar{\oldnorm}{\oldnorm*}}
\makeatother

%%%%%%%%%%%%%%%%%%% 
% SOME SHORTCUTS
%%%%%%%%%%%%%%%%%%%

\newcommand{\eps}{\varepsilon}
\newcommand{\de}{\mathrm{d}}

\newcommand{\at}{\widetilde{\alpha}}

\newcommand{\pifix}{\pi_{\mathrm{fix}}}

%%%%%%%%%%%%%%%%%%% 
% MATH OPERATORS 
%%%%%%%%%%%%%%%%%%%

\DeclareMathOperator{\diag}{diag}

\DeclareMathOperator{\Ima}{image}
\DeclareMathOperator{\Mat}{Mat}

\DeclareMathOperator{\Ad}{Ad}

%%%%%%%%%%%%%%%%%%% 
% MARGIN NOTES
%%%%%%%%%%%%%%%%%%%

%%%%%%%%%%%%%%%%%%%%%%%%%%%%%%%%%%%%%%%%%%
%%%%%%%%%%%%%%%%%%%%%%%%%%%%%%%%%%%%%%%%%%%
\title[Singular vectors on affine subspaces]{An upper bound of the Hausdorff dimension of singular vectors on affine subspaces}
%\thanks{}
%
\author[Shah]{Nimish~A.~Shah}
\address{The Ohio State University, Columbus, OH 43210}
\email{shah@math.osu.edu}
\author[Yang]{Pengyu~Yang}
\address{Morningside Center of Mathematics, Chinese Academy of Sciences, Beijing 100190}
\email{yangpengyu@amss.ac.cn}

%\thanks{Nimish Shah was supported by NSF grant DMS-1700394.}
\thanks{Pengyu Yang is supported by National Key R\&D Program of China 2022YFA1007500 and NSFC grant 22AAA00245.}

%\address{\hspace{-0.5cm} Department of Mathematics\newline
%ETH\newline
%Zurich}
%\email{pengyu.yang@math.ethz.ch}

%
%\subjclass[2010]{22E60}
%\keywords{}
%\date{\today}
%\dedicatory{Preliminary version. Comments welcome.}

\begin{document}
	
	\begin{abstract}
		In Diophantine approximation, the notion of singular vectors was introduced by Khintchine in the 1920's. We study the set of singular vectors on an affine subspace of $\R^n$. We give an upper bound of its Hausdorff dimension in terms of the Diophantine exponent of the parameter of the affine subspace.
	\end{abstract}
	
	\subjclass[2010]{Primary 37A17, 11J83; Secondary 22E46, 14L24, 11J13}
	\keywords{Homogeneous dynamics, Diophantine approximation}
	\maketitle
	%\tableofcontents
	% removes page number from first page
	%\thispagestyle{empty}

	%%%%%%%%%%%%%%%%%%% 
	% TABLE OF CONTENTS
	%%%%%%%%%%%%%%%%%%%
	% allows subsections (depth 1) to be displayed in table of contents
	%\setcounter{tocdepth}{1} 
	%\tableofcontents
	%\vfill
	%\newpage
	
	%Last updated \today.
	
	%\listoffixmes

	%%%%%%%%%%%%%%%%%%%%%%%%%%%%%%%%%%%%%%%%%%%
	%%%%%%%%%%%%%%%%%%%%%%%%%%%%%%%%%%%%%%%%%%%
	\section{Introduction}
	%%%%%%%%%%%%%%%%%%%%%%%%%%%%%%%%%%%%%%%%%%%
	%%%%%%%%%%%%%%%%%%%%%%%%%%%%%%%%%%%%%%%%%%%
	Let $n\geq 2$ be a positive integer. A vector $\bfx\in\R^n$ is called \emph{singular} if for all $\eps>0$, there exists $N_\eps$ such that for all $N\geq N_\eps$, there exist $\bfq\in\Z^n\setminus\{0\}$ and $p\in\Z$ such that
	\[
	\norm{\bfq\cdot\bfx-p}\leq\eps N^{-n} \text{ and } \norm{\bfq}\leq N.
	\]
	Khintchine introduced this notion in \cite{Khintchine1926} and showed that the set of singular vectors has Lebesgue measure zero. In 2011, Cheung \cite{Che11} proved that the Hausdorff dimension of the set of singular vectors in $\R^2$ is $4/3$, and this was extended in 2016 by Cheung and Chevallier \cite{CC16}, who proved that the Hausdorff dimension of the set of singular vectors in $\R^n$ is $n^2/(n+1)$. Similarly, one can define the set of singular $m\times n$ matrices and study its Hausdorff dimension. In 2017 Kadyrov, Kleinbock, Lindenstrauss and Margulis \cite{KKLM17} gave an upper bound $mn(1-\frac{1}{m+n})$, and this upper bound was shown to be sharp by Das, Fishman, Simmons and Urba\'{n}ski in a recent preprint \cite{das2023variational}. We refer the readers to \cite{das2023variational} and the references therein for a detailed history of the study of singularity. For the weighted approximation, Liao, Shi, Solan and Tamam \cite{LSST20} computed the Hausdorff dimension of weighted singular vectors in $\R^2$. In a recent preprint, Kim and Park \cite{kim2022lower} obtained a lower bound of the Hausdorff dimension of weighted singular vectors in $\R^n$.
	
	In this paper, we study the set of singular vectors on proper affine subspaces of $\R^n$. We will give an upper bound of its Hausdorff dimension in terms of the Diophantine exponent of the parameter of the affine subspace.
	
	Let $d<n$ be a positive integer. For a row vector $\bfs=(s_1,\dots, s_d)\in\R^d$, let $\widetilde{\bfs}=(1,s_1,\dots,s_d)\in\R^{d+1}$. We note that up to a permutation of coordinates, any $d$-dimensional affine subspace of $\R^n$ can be given by $\{(\bfs,\widetilde{\bfs}A)\in\R^n: \bfs\in\R^d\}$ for some $A\in \Mat_{d+1,n-d}(\R)$. 
 
 For $A\in\Mat_{d+1,n-d}(\R)$, the Diophantine exponent $\omega(A)$ of $A$ is defined to be the supremum of $\omega$ such that the inequality
	\begin{equation} \label{eq:definition of omega(A)}
	\norm{A\bfq-\bfp}\leq \norm{\bfq}^{-\omega}
	\end{equation}
	has infinitely many solutions $\bfq\in\Z^{n-d}\setminus\{0\}$ and $\bfp\in\Z^{d+1}$. We note that if there exists $\bfq\in \Z^{n-d}$ and $\bfp\in\Z^{d+1}$, not both zero, such that $A\bfq-\bfp=0$, then $\omega(A)=+\infty$.

	\begin{thm}\label{thm:main_sing}
    For $A\in\Mat_{d+1,n-d}(\R)$, let $\mathrm{Sing}_A$ denote the set of $\bfs\in\R^d$ such that $(\bfs, \widetilde{\bfs}A)$ is singular. We have
		\begin{equation*}
		    \dim_H(\mathrm{Sing}_A) \leq 
            \begin{cases}
                \frac{d^2}{d+1}+\frac{nd(\omega(A)-\frac{n-d}{d+1})}{(1+(d+1)\omega(A)-(n-d))(n+1)},\quad &\text{if }\frac{n-d}{d+1} \leq \omega(A) < n-1, \\
                \frac{d^2}{d+1}+\frac{1}{n+1}\big( \omega(A)-\frac{n-d}{d+1} \big),\quad& \text{if } n-1\leq \omega(A) < n, \\
                d,\quad &\text{if }\omega(A)\geq n.
            \end{cases}
		\end{equation*}
	\end{thm}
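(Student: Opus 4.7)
The plan is to use the Dani correspondence to reformulate singularity as divergence of a one-parameter flow on $X = \SL_{n+1}(\R)/\SL_{n+1}(\Z)$. Set
\[
a_t = \diag(e^{nt}, e^{-t}, \ldots, e^{-t}), \qquad u(\bfx) = \begin{pmatrix} 1 & \bfx \\ 0 & I_n \end{pmatrix},
\]
and write $\varphi(\bfs) = (\bfs, \widetilde{\bfs}A)$ for the parametrization of the affine subspace. Then $\bfs \in \mathrm{Sing}_A$ if and only if the orbit $\{a_t u(\varphi(\bfs))\Z^{n+1}\}_{t\geq 0}$ eventually leaves every compact subset of $X$, so the task is to bound the Hausdorff dimension of the set of $\bfs \in \R^d$ whose $a_t$-trajectory diverges.

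I would implement a Kadyrov--Kleinbock--Lindenstrauss--Margulis-type covering scheme adapted to the affine setting. Fix $\eta > 0$, and let $K_\eta \subset X$ denote the compact set (via Mahler's criterion) of lattices with no vector of norm $< \eta$. Cover $\mathrm{Sing}_A$ by the sets $\{\bfs \in B : a_t u(\varphi(\bfs))\Z^{n+1} \notin K_\eta \text{ for all } t \geq T\}$ for a fixed bounded ball $B \subset \R^d$ and $T \to \infty$. Partitioning $[T, \infty)$ into intervals of length $\tau$ and covering each level set by boxes of side length $\delta_k = e^{-(n+1)t_k}$ (the horospherical contraction scale at $t_k = k\tau$), the problem reduces to estimating, at each step $k$, the number of such boxes on which there exists a primitive sublattice $V \subset \Z^{n+1}$ of some rank $r$ whose image $a_{t_k} u(\varphi(\bfs))V$ has small covolume, which by Minkowski's theorem is what causes the orbit to exit $K_\eta$.

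The key computation is to expand the covolume of $a_{t_k} u(\varphi(\bfs)) V$ in the Pl\"ucker coordinates of $V$: this produces a polynomial of degree at most $r$ in $\bfs$ whose coefficients couple the entries of $A$ with the Pl\"ucker coordinates of $V$. The Diophantine exponent $\omega(A)$ plays the decisive role here: sublattices $V$ whose Pl\"ucker coordinates lie close to integer vectors realising near-solutions of \eqref{eq:definition of omega(A)} -- which we call \emph{aligned} -- can produce anomalously small covolumes, and the abundance of such $V$ is controlled precisely by $\omega(A)$. One then applies a quantitative Kleinbock--Margulis sub-level-set estimate to each Pl\"ucker polynomial, with a coefficient lower bound supplied either by the generic height of $V$, or, for aligned $V$, by the definition of $\omega(A)$ itself. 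The main technical obstacle is producing these coefficient bounds uniformly across all $(r, V)$-types and matching them cleanly against the dynamical contraction factor; this requires a delicate case analysis stratifying sublattices by rank and by alignment type.

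Summing the covering cost over $k$ and optimizing the trade-off between aligned and non-aligned contributions yields the stated piecewise bound. The threshold $\omega(A) = \tfrac{n-d}{d+1}$ is the Dirichlet exponent for $A$, corresponding to the baseline dimension $d^2/(d+1)$ in the generic case; in the regime $\tfrac{n-d}{d+1} \leq \omega(A) < n-1$, aligned sublattices of intermediate rank dominate the cost, producing the more elaborate rational function in $\omega(A)$. The transition at $\omega(A) = n - 1$ marks the point where rank-one aligned sublattices take over, yielding the simpler linear dependence of the second regime. Finally, $\omega(A) = n$ is the saturation threshold beyond which the approximability of $A$ is so strong that no restriction below the trivial bound $\dim \R^d = d$ survives this approach.
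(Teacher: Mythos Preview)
Your proposal and the paper pursue genuinely different routes. The paper does not attempt a direct Pl\"ucker-coordinate covering argument on the affine curve. Instead, it exploits a splitting $g_t=c_tb_t$ with $c_t\in H\cong\SL_{d+1}(\R)$ and $b_t\in Z_G(H)$, and uses the Benoist--Quint/Shi height function $\alpha$, which already satisfies a clean $\lambda_0$-contraction inequality for the pair $(c_t,U)$. The Diophantine content of $A$ is then isolated in a single scalar, the dynamical exponent $\rho(y_A)=\limsup_{t\to\infty}t^{-1}\log\alpha(b_ty_A)$, and a new height $\tilde\alpha(y)=\int_0^\infty e^{-\delta t}\alpha(b_ty)\,dt$ is shown (Lemma~\ref{lem:dynamical height function}) to satisfy a $(\lambda_0-\delta)$-contraction inequality for $g_t$ whenever $\delta>\rho(y_A)$. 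The KKLM/Khalil covering machine then gives $\dim_H(E_A)\le d-(\lambda_0-\delta)$, and a separate computation (Section~\ref{sect:Diophantine exponents}) bounds $\rho(y_A)$ in terms of $\omega(A)$; that computation, not a rank stratification of aligned sublattices, is where the three regimes and the thresholds $n-1$, $n$ emerge (from comparing the bounds \eqref{eq:bound in case 1.2} and \eqref{eq:bound in case 2}, with Case~1.1 always dominated by Case~2).

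The gap in your outline is precisely the part you flag as ``the main technical obstacle'': you have no mechanism that converts the hypothesis on $\omega(A)$ into a uniform lower bound on the relevant Pl\"ucker-coefficient polynomials across all ranks $r$ and all ``alignment types,'' and without that the Kleinbock--Margulis sub-level estimate yields nothing quantitative. Your proposed stratification into aligned and non-aligned sublattices is not defined, and the heuristic that rank-one alignments govern the middle regime and intermediate ranks govern the first does not match the paper's actual derivation. The paper sidesteps this entire case analysis by pushing the $b_t$-direction into the integral defining $\tilde\alpha$ and reducing the Diophantine input to the single inequality $\rho(y_A)<\delta$; the price is the computation in Section~\ref{sect:Diophantine exponents}, which uses Pl\"ucker relations (equation~\eqref{eq:recursion}) and Minkowski's theorem, but only to control one scalar rather than to feed a covering count. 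If you want to pursue your direct route, the missing idea you would need to supply is an analogue of Proposition~\ref{prop:relating Diophantine exponents} phrased as a coefficient lower bound for the covolume polynomials, uniform in $V$; absent that, the sketch does not constitute a proof.
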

	
	\begin{remark}
		By Dirichlet's approximation theorem, $\omega(A)\geq\frac{n-d}{d+1}$ for every $A\in\Mat_{d+1,n-d}(\R)$. If $\omega(A)>\frac{n-d}{d+1}$, then $A$ is called \emph{very well approximable}, and it is well-known that the set of very well approximable matrices has Lebesgue measure zero. Hence for Lebesgue almost every $A\in\Mat_{d+1,n-d}(\R)$, the upper bound we get in \Cref{thm:main_sing} is $\frac{d^2}{d+1}$. We also note that the right hand side of the inequality in the theorem is a continuous function in $\omega(A)$.
	\end{remark}

    \subsubsection*{Geometric formulation}
        Given any $d$-dimensional linear subspace $W$ of $\bbP^n(\R)$, we may define the Diophantine exponent $\omega(W)$ of $W$ to be the supremum of $\omega$ such that the following holds: there exist infinitely many hyperplanes $Q\subset \bbP^n(\R)$ defined over $\Q$ such that $d(W, Q) \leq H(Q)^{-\omega-1}$. Here $d(W,Q)=\sup_{[x]\in W}\inf_{[y]\in Q}\frac{\norm{x\wedge y}}{\norm{x}\norm{y}}$, and $H$ denotes the Weil height on $\Gr(n,n+1)(\Q)\cong \bbP^n(\Q)$ associated with $\mathcal{O}_{\bbP^n}(1)$. More explicitly, $Q$ is defined by a linear equation with $\Q$ coefficients. Let $v_Q\in\Z^{n+1}\setminus\{0\}$ denote the primitive integral vector defining $Q$; that is $Q=v_Q^\perp$. Then we define $H(Q)$ to be $\norm{v_Q}$. One can also write $d(W,Q)=\sup_{[x]\in W}\frac{x\cdot v_Q}{\norm{x}\Vert{v_Q}\Vert}$.
        
        Now for $A\in\Mat_{d+1,n-d}(\R)$, let $\cL_A=\{ (\bfs,\widetilde{\bfs}A) : \bfs\in\R^d \}$ be the associated $d$-dimensional affine subspace. Let $W_A$ be the projective closure of $\cL_A$ in $\bbP^n(\R)$. Then one may check that $\omega(W_A)=\omega(A)$.

 \bigskip
 
	To prove \Cref{thm:main_sing}, we shall use Dani correspondence to reformulate the problem in the language of homogeneous dynamics. Let $X_{n+1}$ denote the space of unimodular $(n+1)$-lattices $X_{n+1}=G/\Gamma$, where $G=\SL_{n+1}(\R)$ and $\Gamma=\SL_{n+1}(\Z)$. Let $g_t=\diag(e^{\frac{n}{n+1}t}, e^{-\frac{1}{n+1}t}, \dots, e^{-\frac{1}{n+1}t})$. For $\bfx\in\R^n$, let
	\[
	u(\bfx)=\begin{pmatrix}
	1 & \bfx \\
	0 & I_n
	\end{pmatrix}
	\]
	By Dani correspondence \cite[Theorem~2.14]{Dan85}, $\bfx$ is singular if and only if $u(\bfx)\Gamma$ is \emph{divergent} in $X_{n+1}$; here we say $x\in X_{n+1}$ is divergent if $g_tx$ leaves any compact set in $X_{n+1}$ for all large $t$. We say that $x\in X_{n+1}$ is \emph{$g_t$-divergent on average} if
	\[
	\lim_{N\to\infty}\frac{1}{N}\abs{\{ l\in\{1,\dots,N\}\colon g_lx\in K \}}=0
	\]
	for every compact set $K$ in $X_{n+1}$. Clearly, $x$ is divergent on average if $x$ is divergent. Hence, \Cref{thm:main_sing} follows immediately from the following theorem.
	
	\begin{thm}\label{thm:main_div_on_avg}
		Let $A\in\Mat_{d+1,n-d}(\R)$. Let
		\begin{equation}\label{eq:set of div_on_avg}
		E_A:=\{ \bfs\in\R^d \colon u(\bfx)\Gamma \text{ is $g_t$-divergent on average for } \bfx=(\bfs, \widetilde{\bfs}A) \}.
		\end{equation}
		Then
        \begin{equation} \label{eq:E_A}
            \dim_H(E_A) \leq 
            \begin{cases}
                \frac{d^2}{d+1}+\frac{nd(\omega(A)-\frac{n-d}{d+1})}{(1+(d+1)\omega(A)-(n-d))(n+1)}, &\text{if }\omega(A) < n-1, \\
                \frac{d^2}{d+1}+\frac{1}{n+1}\big( \omega(A)-\frac{n-d}{d+1} \big),& \text{if } n-1\leq \omega(A) < n, \\
                d, &\text{if } \omega(A)\geq n.
            \end{cases}
        \end{equation}
	\end{thm}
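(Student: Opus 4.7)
My plan is to follow the quantitative non-divergence scheme of Kadyrov--Kleinbock--Lindenstrauss--Margulis \cite{KKLM17}, adapted to the restricted $d$-dimensional unipotent family $\bfs\mapsto u(\bfs,\widetilde{\bfs}A)\Gamma$. The two key ingredients are: (i)~a proper height function $\varphi\colon X_{n+1}\to[1,\infty)$, built as a weighted sum $\sum_V \mathrm{cov}(V,\cdot)^{-\alpha}$ over primitive rational subspaces $V\subset\Q^{n+1}$, satisfying an integral contraction inequality
\[
\int_B \varphi\bigl(g_T\, u(\bfs,\widetilde{\bfs}A)\,x\bigr)\,\de\bfs \;\le\; c^T\,\varphi(x) \;+\; C\lvert B\rvert
\]
valid for every $x\in X_{n+1}$ and every ball $B\subset\R^d$ of unit size, with contraction rate $c=c(d,n,\omega(A))\in(0,1]$; and (ii)~a cover-counting argument that turns $c$ into a Hausdorff dimension bound. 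For (ii) I would cover $E_A\cap B_0$ (for a fixed bounded $B_0\subset\R^d$) at scale $e^{-NT}$ by dyadic sub-boxes, iterate the inequality along $g_T,g_{2T},\dots,g_{NT}$, and apply Chebyshev to control those $\bfs$ whose orbit spends at least $\delta N$ of its first $N$ iterates outside the sub-level set $\{\varphi\le\rho\}$. By the definition of divergence on average this eventually covers all of $E_A$; letting $\rho\to\infty$ and $\delta\to 1$ and optimising yields a bound on $\dim_H(E_A)$ of the form $d-\log c^{-1}/T$ (up to the $g_T$-expansion rate on the $\bfs$-direction).

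The main obstacle is establishing (i) with the sharp rate $c$ in terms of $\omega(A)$. For each rational subspace $V$ with primitive Pl\"ucker vector $v_V$, the Kleinbock--Margulis $(C,\alpha)$-good machinery bounds the integrand $\mathrm{cov}(V,g_T u(\bfs,\widetilde{\bfs}A)\Gamma)^{-\alpha}$ in terms of a power of $\bigl(\sup_B \lVert g_T u(\bfs,\widetilde{\bfs}A)\, v_V\rVert\bigr)^{-1}$, so one needs effective lower bounds on the latter. The critical configuration is when $v_V$ is close to the annihilator of $W_A$: by the geometric reformulation $\omega(W_A)=\omega(A)$, the existence of rational hyperplanes $Q\subset\bbP^n$ with $d(W_A,Q)\le H(Q)^{-\omega-1}$ is constrained precisely by $\omega(A)$, and this constraint translates into an effective lower bound on $\lVert g_T u(\bfs,\widetilde{\bfs}A)\, v_V\rVert$ once $T$ is sufficiently large relative to the height of $V$. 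This is the mechanism by which $\omega(A)$ enters the contraction rate, replacing the identically vanishing bound one would get from subspaces containing $W_A$ on the full $n$-dimensional horospherical family.

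Carrying out the optimisation in (i) requires stratifying the sum defining $\varphi$ by $\dim V=k\in\{1,\dots,n\}$, assigning an independent weight $\alpha_k$ to each stratum, and tracking how the $g_T$-weights on $\bigwedge^k\R^{n+1}$ interact with the Diophantine constraint coming from $\omega(A)$. The three regimes in \eqref{eq:E_A} correspond to three configurations in which the optimal stratum and the optimal weights change: for $\tfrac{n-d}{d+1}\le\omega(A)<n-1$ the dominant $V$ is a ``generic'' rational subspace whose contribution degrades smoothly with $\omega(A)$; for $n-1\le\omega(A)<n$ a different stratum dominates, producing a different linear dependence on $\omega(A)$; and for $\omega(A)\ge n$ the Diophantine obstruction is severe enough that the contraction degenerates to $c=1$, leaving only the trivial bound $d$. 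The most delicate step, which I expect to be the heart of the proof, is the bookkeeping in the intermediate regime $\tfrac{n-d}{d+1}\le\omega(A)<n-1$, where partial alignment of $V$ with $W_A$ must be balanced against the full-rank contributions to $\varphi$ to yield the sharp exponent.
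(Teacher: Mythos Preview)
Your outline departs from the paper's argument in a structural way, and the point where you propose to insert $\omega(A)$ does not work as stated.

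Using the identity $u(\bfs,\widetilde{\bfs}A)=z_A\,u(\bfs)\,u_A$ with $z_A\in Z_G(\{g_t\})$, $u(\bfs)$ ranging over a \emph{fixed} $d$-dimensional unipotent subgroup of $H=\SL_{d+1}(\R)\hookrightarrow G$, and $u_A$ a fixed element, one sees that for any height $\varphi$ Lipschitz under $G$,
\[
\int_{I^d}\varphi\bigl(g_T\,u(\bfs,\widetilde{\bfs}A)\,x\bigr)\,\de\bfs\ \asymp\ \int_{I^d}\varphi\bigl(g_T\,u(\bfs)\,(u_Ax)\bigr)\,\de\bfs.
\]
So an inequality of the form $\int\varphi(g_Tu_A(\bfs)x)\,\de\bfs\le c^T\varphi(x)+C$ valid for \emph{all} $x$ necessarily has $c$ independent of $A$: the matrix $A$ only shifts the base point to $y_A=u_A\Gamma$. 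Thus $\omega(A)$ cannot enter through the contraction rate of a height function that does not itself depend on $A$.

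What the paper does instead is split $g_t=c_tb_t$ with $c_t\in H$ and $b_t\in Z_G(H)$. The Benoist--Quint height $\alpha$ satisfies $\int_{I^d}\alpha(c_T u(\bfs)y)\,\de\bfs\le Ce^{-\theta T}\alpha(y)+b$ for any $\theta<d/(d+1)$, uniformly in $y$; there is no contraction along $b_t$. One then defines $\widetilde\alpha(y)=\int_0^\infty e^{-\delta t}\alpha(b_ty)\,\de t$, which inherits a $(\theta-\delta)$-contraction for $g_T$; the price is that $\widetilde\alpha(y_A)<\infty$ only when $\delta$ exceeds the \emph{dynamical exponent}
\[
\rho(y_A)\ :=\ \limsup_{t\to\infty}\,t^{-1}\log\alpha(b_ty_A).
\]
The KKLM/Khalil covering argument then yields $\dim_H(E_A)\le d-(\theta-\delta)\to \tfrac{d^2}{d+1}+\rho(y_A)$ as $\theta\uparrow d/(d+1)$ and $\delta\downarrow\rho(y_A)$. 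All of the Diophantine content is thus compressed into the single inequality bounding $\rho(y_A)$ by an explicit function of $\omega(A)$.

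Establishing that inequality is the heart of the paper, and it does not reduce to rational hyperplanes near $W_A$ as you suggest. One must analyse decomposable $y_A$-integral vectors in $\Lambda^k\R^{n+1}$ for every $1\le k\le n$, project onto the $H$-isotypic pieces $\pi_i(b_tu_Av)$, and use Minkowski and the Pl\"ucker relations to extract integer solutions $(\bfp,\bfq)$ of $\norm{A\bfq-\bfp}\ll\norm{\bfq}^{-\omega}$. The case split $k\le n-d$ versus $k>n-d$ (and a further subdivision of the former via Pl\"ucker relations) is what produces the two regimes $\omega(A)<n-1$ and $n-1\le\omega(A)<n$; it is a comparison of worst cases across dimensions, not a change of dominant stratum inside a single weighted sum.
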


	Our proof of \Cref{thm:main_div_on_avg} follows the approach of \cite{KKLM17} and \cite[\S4]{Kha20}. The novel part is constructing a suitable Margulis height function that satisfies the contraction hypothesis. This Margulis height function will be based on the one constructed in \cite{BQ12} and further explored in \cite{Shi20}. Our construction involves introducing a dynamically defined Diophantine exponent for points in $X_{n+1}$. We explore its relation with the classical Diophantine exponent in \Cref{sect:Diophantine exponents}.

	\subsection*{Acknowledgement}
	We would like to thank Manfred Einsiedler, Alexander Gorodnik, Wooyeon Kim, Dmitry Kleinbock, Nicolas de Saxc\'e, Ronggang Shi, and Shucheng Yu for their helpful discussions.

	%%%%%%%%%%%%%%%%%%%%%%%%%%%%%%%%%%%%%%%%%%%
	%%%%%%%%%%%%%%%%%%%%%%%%%%%%%%%%%%%%%%%%%%%
	\section{Margulis function}\label{sect:Margulis function}
	%%%%%%%%%%%%%%%%%%%%%%%%%%%%%%%%%%%%%%%%%%%
	%%%%%%%%%%%%%%%%%%%%%%%%%%%%%%%%%%%%%%%%%%%	
	
	Let $G$ be a connected Lie group, $\Gamma$ a lattice in $G$, and let $X=G/\Gamma$. Let $H\leq G$ be a connected semisimple Lie group without compact factors, and let $Z_G(H)$ denote its centralizer in $G$. Let $\{c_t\}_{t\in\bbR}$ be an $\Ad$-diagonalizable one-parameter subgroup of $H$, and let $\{b_t\}_{t\in\bbR}$ be a one-parameter subgroup of $G$ which is contained in $Z_G(H)$. Let $I$ be the interval $[-\frac{1}{2},\frac{1}{2}]$. Let $U$ be a $c_1$-expanding abelian subgroup of $H$; that is, in each finite-dimensional linear representation of $H$, every $U$-fixed vector has non-negative $c_t$-weight (see \cite{Shi20}). We fix an isomorphism of Lie groups
	\begin{equation*}
	u\colon \bbR^m\to U.
	\end{equation*}
	
	\begin{lem}[\cite{Shi20}, Lemma 4.1]\label{lem:Benoist-Quint height function}
		There exist $\lambda_0>0$ and $T_0>0$ such that for any compact subset $Z$ of $X$ and $t_0\geq T_0$, there exists a lower semicontinuous function $\alpha: X\to [0,\infty]$ and $c>0$ and $b>0$ with the following properties:
		\begin{enumerate}
			\item For every $y\in X$,
			\begin{equation}\label{eq:contraction hypothesis c_t}
			\int_{I^m}\alpha(c_{t_0}u(w)y)\de w 
			\leq ce^{-\lambda_0t_0}\alpha(y)+b.
			\end{equation}
			
			\item $\alpha$ is finite on $HZ$ and bounded on $Z$.
			
			\item $\alpha$ is Lipschitz with respect to the action of $H$, that is, for every compact subset $F$ of $H$ there exists $C\geq1$ such that $\alpha(gy)\leq C\alpha(y)$ for every $y\in X$ and $g\in F$.
			
			\item $\alpha$ is proper, that is, if $\alpha(Z_0)$ is bounded for some subset $Z_0$ of $X$, then $Z_0$ is relatively compact.
		\end{enumerate}
	\end{lem}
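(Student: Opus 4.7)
The strategy is to build $\alpha$ as a weighted sum (or supremum) of norms, raised to a small negative power, over a finite family of $H$-subrepresentations evaluated on a distinguished discrete collection of vectors attached to each $y\in X$. Fix finitely many finite-dimensional representations $V_1,\dots,V_r$ of $G$ together with, for each $y=g\Gamma\in X$, discrete $H$-invariant subsets $\Lambda_i(y)\subset V_i$ (constructed from $g$-translates of rational vectors, or their exterior wedges, as in \cite{BQ12,Shi20}). Decompose each $V_i$ into weight spaces for $c_t$; the hypothesis that $U$ is $c_1$-expanding forces every $U$-fixed vector in $V_i$ to have non-negative $c_t$-weight. Arrange the family so that a Mahler-type criterion holds: $y$ leaves every compact set in $X$ if and only if $\min_{i,\,v\in\Lambda_i(y)}\|v\|\to 0$.

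The technical core is a single-representation estimate. For each $V_i$ there exist $\delta_i>0$, $\lambda_i>0$, and constants $C_i,B_i$ such that for all $v\in V_i\setminus\{0\}$ and $t_0$ sufficiently large,
\[
\int_{I^m}\|c_{t_0}u(w)v\|^{-\delta_i}\,\de w \;\leq\; C_i\,e^{-\lambda_i t_0}\|v\|^{-\delta_i} + B_i.
\]
One proves this by decomposing $v$ along $c_t$-weight spaces, observing that $w\mapsto u(w)v$ is polynomial of uniformly bounded degree on $I^m$, and applying a sublevel-set (``$(C,\beta)$-good'') estimate to the orbital polynomial $\|u(w)v\|^2$. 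The non-negativity of the $U$-fixed weights ensures that any component of $v$ that is not averaged into cancellation by $u(w)$ has a $c_{t_0}$-weight contracting it at rate $e^{-\lambda_i t_0}$.

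Assemble $\alpha(y)$ from the norms $\|v\|^{-\delta}$ as $v$ ranges over $\bigcup_i \Lambda_i(y)$ with a uniform $\delta:=\min_i\delta_i$, with a weighting chosen so that the single-representation estimate can be applied term by term and the resulting series converges on a dense open set. Setting $\lambda_0:=\min_i\lambda_i$, the contraction inequality in part (1) follows from the above estimate summed over $v$, combined with a high-low dichotomy in the spirit of \cite{KKLM17}: vectors with $\|v\|\geq \eta$ contribute only to the additive constant $b$, while vectors with $\|v\|<\eta$ get contracted by $c_{t_0}$ and yield the $c e^{-\lambda_0 t_0}\alpha(y)$ term. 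Lower semicontinuity is immediate from the series representation. Properties (2) and (3) follow from discreteness of the $\Lambda_i(y)$, compactness of $Z$, and the fact that $g\mapsto \|gv\|$ is Lipschitz for $g$ in any compact subset of $H$; (4) is exactly the built-in Mahler-type criterion. The main obstacle is the single-representation estimate, where one must select a value of $\delta_i$ that works uniformly over $v\in V_i$ and simultaneously keeps the global sum convergent; this requires balancing the sublevel-set loss (controlled by the degree of the orbital polynomial) against the contraction factor from the weight non-negativity, and is the content of Lemma 4.1 of \cite{Shi20}, building on \cite{BQ12}.
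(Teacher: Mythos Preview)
The paper does not prove this lemma itself; it is quoted from \cite[Lemma~4.1]{Shi20}, with the construction originating in \cite{BQ12}. However, \S\ref{sect:ContractionHypothesis} of the present paper carries out the construction explicitly for $G=\SL_{n+1}(\R)$, $H=H_d$, so one can compare your sketch against that.

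Your overall strategy is the right one, but several details differ from the actual construction in ways that matter. First, $\alpha$ is a \emph{maximum}, not a sum: $\alpha_\eps^\theta(y)=\max_v\varphi_\eps^\theta(v)$ over $y$-integral decomposable vectors (see \eqref{eq:alpha_eps^theta}). Second, and more importantly, the building block is not $\|v\|^{-\delta}$ with a single uniform exponent. In \eqref{eq:def_phi_eps} one has $\varphi_\eps(v)=\min_{1\leq i\leq d}\eps^{\frac{d+1}{d+1-i}\delta_k}\|\pi_i(v)\|^{-\frac{d+1}{d+1-i}}$, together with a threshold $\varphi_\eps(v)=0$ unless the $H$-fixed component satisfies $\|\pifix(v)\|<\eps^{\delta_k}$. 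The point of the varying exponents $\frac{d+1}{d+1-i}$ is precisely to equalize the contraction rates across the different $H$-isotypic pieces so that a single $\lambda_0$ works (compare Lemma~\ref{lem:expansion of vectors} with \eqref{eq:integral inequality for vector}); your proposal to take a uniform $\delta=\min_i\delta_i$ would not achieve this. Third, the single-vector estimate (\eqref{eq:contraction_vector}, \eqref{eq:integral inequality for vector}) carries \emph{no} additive constant $B_i$; the additive $b$ in \eqref{eq:contraction hypothesis c_t} arises only in the passage from individual vectors to the lattice maximum (Lemma~\ref{lem:contraction_alpha}). That passage is not just a high--low split on $\|v\|$: it relies on the threshold on $\pifix$ and on the ``mother inequality'' mechanism of \cite{BQ12} relating successive exterior powers of a lattice, which is what the paper defers to \cite[Lemma~4.4]{Shi20}. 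Your sketch elides exactly this step.
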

	
	If \eqref{eq:contraction hypothesis c_t} holds, we shall say that $\alpha$ satisfies \emph{$\lambda_0$-contraction hypothesis} for $(c_t, U)$. Such a function is referred to as \emph{Margulis height function}. The function $\alpha$ was originally constructed in \cite{BQ12}.
	
	Let $\alpha\colon X\to[0,\infty]$ be a lower semicontinuous function satisfying (1)-(4) in \Cref{lem:Benoist-Quint height function}. Let $0<\delta<\lambda_0$ be a constant. We construct a function $\at=\at_\delta\colon X\to[0,\infty]$ as follows:
	\begin{equation}\label{eq:definition of dynamical height function}
	\at(y)=\at_\delta(y)=\int_{0}^{\infty}e^{-\delta t}\alpha(b_ty)\de t.
	\end{equation}
 
	Now let $g_t=c_tb_t$.
	\begin{lem}\label{lem:dynamical height function}
            Let $\at$ be as in \eqref{eq:definition of dynamical height function}.
		\begin{enumerate}
			\item There exists a constant $\widetilde{b}>0$ such that for every $y\in X$,
			\begin{equation}\label{eq:contraction hypothesis g_t}
			\int_{I^m}\at(g_tu(w)y)\de w 
			\leq ce^{-(\lambda_0-\delta)t}\at(y)+\widetilde{b}.
			\end{equation}
			
			\item $\at$ is Lipschitz with respect to the action of $H$.
			
			%\item $\at$ is proper.
		\end{enumerate}
	\end{lem}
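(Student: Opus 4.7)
The strategy rests on the structural fact that $\{b_s\}\subset Z_G(H)$ commutes with every element of $H$, in particular with $c_t$ and with $u(w)$ for $w\in I^m$. This yields the key identity
\[
b_s\,g_t u(w) \;=\; b_s\, c_t b_t u(w) \;=\; c_t u(w)\,b_{s+t},
\]
which converts the $(c_t,U)$-contraction of $\alpha$ into the desired $(g_t,U)$-contraction of $\at$.

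For part (1), I would insert the definition \eqref{eq:definition of dynamical height function} of $\at$ into the left hand side of \eqref{eq:contraction hypothesis g_t} and exchange the order of integration (legitimate by Tonelli, since $\alpha\ge 0$) to obtain
\[
\int_{I^m}\at(g_t u(w) y)\,\de w \;=\; \int_0^\infty e^{-\delta s}\!\int_{I^m}\alpha(c_t u(w)\, b_{s+t} y)\,\de w\,\de s.
\]
Now I apply \Cref{lem:Benoist-Quint height function}(1) to the point $b_{s+t}y$, which bounds the inner integral by $c e^{-\lambda_0 t}\alpha(b_{s+t}y)+b$. The $b$-term contributes a uniform constant $b/\delta$ after integration against $e^{-\delta s}\,\de s$, while the first term, after the substitution $u=s+t$, becomes
\[
c e^{-\lambda_0 t}\cdot e^{\delta t}\!\int_t^\infty e^{-\delta u}\alpha(b_u y)\,\de u \;\le\; c e^{-(\lambda_0-\delta)t}\,\at(y).
\]
Setting $\widetilde{b}:=b/\delta$ then yields \eqref{eq:contraction hypothesis g_t}; here the assumption $\delta<\lambda_0$ is precisely what ensures $\lambda_0-\delta>0$, so this is a genuine contraction. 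The value $t$ in the statement should be taken equal to the parameter $t_0\ge T_0$ used to construct $\alpha$, which is the only regime needed when iterating.

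For part (2), the same commutativity $b_s h = h b_s$ for $h\in H$ reduces Lipschitz continuity of $\at$ directly to Lipschitz continuity of $\alpha$: for $h$ in a fixed compact $F\subset H$, with constant $C$ furnished by \Cref{lem:Benoist-Quint height function}(3),
\[
\at(hy) \;=\; \int_0^\infty e^{-\delta s}\alpha(h b_s y)\,\de s \;\le\; C\!\int_0^\infty e^{-\delta s}\alpha(b_s y)\,\de s \;=\; C\,\at(y).
\]

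There is no serious obstacle; the lemma is a bookkeeping computation resting on two ingredients, namely the centralizer identity $[\{b_s\},H]=1$ and the choice $\delta<\lambda_0$. The only points worth verifying are the positivity-based swap of integrals and the observation that the bound in (1) is trivial whenever $\at(y)=\infty$, so no separate finiteness argument is required for the inequality itself.
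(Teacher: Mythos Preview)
Your proposal is correct and matches the paper's proof essentially line for line: the same commutation identity $b_s c_{t_0}u(w)=c_{t_0}u(w)b_s$, the same Tonelli swap, the same application of \Cref{lem:Benoist-Quint height function}(1) to $b_{s+t_0}y$, the same substitution in the $s$-integral, and the same value $\widetilde{b}=b/\delta$. Your remarks on the restriction $t=t_0\ge T_0$ and on the trivial case $\at(y)=\infty$ are accurate and slightly more explicit than the paper's write-up.
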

	
	\begin{proof}
		Let $\widetilde{b}=\delta^{-1}b$. For $t_0\geq T_0$, we have
		\begin{equation*}
		\begin{split}
		\int_{I^m}\at(g_{t_0}u(w)y)\,\de w &=\int_{I^m}\at(c_{t_0}u(w)b_{t_0}y)\,\de w \\
		&=\int_{I^m}\int_{0}^{\infty}e^{-\delta t}\alpha(c_{t_0}u(w)b_{t+t_0}y)\,\de t\,\de w, \quad\text{ by }\eqref{eq:definition of dynamical height function}\\
		&=\int_{0}^{\infty}e^{-\delta t}\int_{I^m}\alpha(c_{t_0}u(w)b_{t+t_0}y)\,\de w\,\de t,\\
		&\leq \int_{0}^{\infty}e^{-\delta t}(ce^{-\lambda_0t_0}\alpha(b_{t+t_0}y)+b)\,\de t,\quad\text{ by }\eqref{eq:contraction hypothesis c_t}\\
		&=ce^{-\lambda_0t_0}\int_{t_0}^\infty e^{\delta t_0}e^{-\delta t'}\alpha(b_{t'}y)\,\de t'+\widetilde{b},\quad t'=t+t_0\\
		&\leq ce^{-(\lambda_0-\delta)t_0}\int_0^\infty e^{-\delta t'}\alpha(b_{t'}y)\,\de t'+\widetilde{b}\\
		&=ce^{-(\lambda_0-\delta)t_0}\at(y)+\widetilde{b}.
		\end{split}
		\end{equation*}
		This verifies \eqref{eq:contraction hypothesis g_t}.

    Since $\alpha$ is Lipschitz with respect to the action of $H$ and $b_t$ commutes with $H$, it follows that $\at$ is Lipschitz with respect to the action of $H$. 
    \end{proof}

For any $y\in G/\Gamma$, we define its \emph{exponent} with respect to $\{b_t\}$ and $\alpha$ to be
	
	\begin{equation}\label{eq:def_exponent}
	\rho(y;b_t,\alpha)=\limsup_{t\to\infty}\frac{\log\alpha(b_ty)}{t}.
	\end{equation}
	If $\{b_t\}$ and $\alpha$ are clear from the context, we will simply write $\rho(y)$ for $\rho(y;b_t,\alpha)$.
	
	\begin{remark}\label{rmk:properties of exponent}
		From the definition of $\tilde \alpha$, it follows that $\rho$ is invariant under the action of $b_t$. And by (2) of \Cref{lem:dynamical height function}, $\rho$ is invariant under the action of $H$. 
	\end{remark}

        \begin{remark}\label{rmk:finiteness of Margulis function}
           If $\int_{0}^{t_1} e^{-\delta t}\alpha(b_ty)\de t$ for all $t_1\geq 0$, and $\rho(y)<\delta$, then 
           \[
           \at(y)=\at_\delta(y)<+\infty.
           \]
           To see this, suppose $\delta_1$ is such that $\rho(y;b_t,\alpha)<\delta_1<\delta$. Then for some $t_1>0$, $\alpha(b_ty)\leq e^{\delta_1t}$ for all $t\geq t_1$. Hence
            \begin{align*}
               \int_{0}^{\infty}e^{-\delta t}\alpha(b_ty)\de t
               \leq \int_{0}^{t_1} e^{-\delta t}\alpha(b_ty)\de t + \int_{t_1}^\infty e^{-\delta t}e^{\delta_1 t}\de t.
            \end{align*}
        \end{remark}
	
	%%%%%%%%%%%%%%%%%%%%%%%%%%%%%%%%%%%%%%%%%%%%%%%%%%%%%%%%%%%%%%%%%%%
	\section{Contraction hypothesis}\label{sect:ContractionHypothesis}
        %%%%%%%%%%%%%%%%%%%%%%%%%%%%%%%%%%%%%%%%%%%%%%%%%%%%%%%%%%%%%%%%%%%
	Let $G=\SL_{n+1}(\bbR)$ and  $H=H_d=\begin{pmatrix}
	\SL_{d+1}(\bbR) & \\
	& I_{n-d}
	\end{pmatrix}$. Consider the one-parameter diagonal subgroups of $G$ defined by the following:
	\begin{align}
	&g_t=\begin{pmatrix}
	e^{\frac{n}{n+1}t} & \\
	& e^{-\frac{1}{n+1}t}I_n
	\end{pmatrix},\label{eq:gt}\\
	&b_t=\begin{pmatrix}
	e^{\frac{n-d}{(d+1)(n+1)}t}I_{d+1} & \\
	& e^{-\frac{1}{n+1}t}I_{n-d}
	\end{pmatrix}\in Z_G(H)\text{, and } \label{eq:bt}\\
	&c_t=\begin{pmatrix}
	e^{\frac{d}{d+1}t} & & \\
	& e^{-\frac{1}{d+1}t}I_d & \\
	& & I_{n-d}
	\end{pmatrix}\in H. \label{eq:ct}
	\end{align}
	Then $g_t=b_tc_t$. We note that this is a specialization of the setting in \Cref{sect:Margulis function}.

        In this particular situation, we would like to give the explicit expression of $\alpha$ in \Cref{lem:Benoist-Quint height function} and compute its contraction rate $\lambda_0$ as in \eqref{eq:contraction hypothesis c_t}.
	
	Restricting the standard action of $G$ on $\bbR^{n+1}$ to $H_d$, we have the following decomposition of $H_d$-modules
	\begin{equation*}
	\bbR^{n+1}=V_0^\perp\bigoplus V_0,
	\end{equation*} 
	where $V_0^\perp$ is the $\bbR$-span of $\{e_0,\dots,e_d\}$ and $V_0$ is the $\bbR$-span of $\{e_{d+1},\dots,e_{n}\}$. Here $V_0^\perp$ is the standard representation of $H_d\cong \SL_{d+1}(\R)$ and $H_d$ acts trivially on $V_0$.
	Taking exterior products, we get
	\begin{equation}\label{eq:decomposition of exterior representations}
	\Lambda^k\bbR^{n+1}=
	\bigoplus_{i=\max\{0,k+d-n\}}^{\min\{d+1,k\}}
	\Lambda^i(V_0^\perp)\bigotimes\Lambda^{k-i}V_0,
	\end{equation}
	because if $\Lambda^i(V_0^\perp)\neq 0$, then $0\leq i\leq d+1$, and if $\Lambda^{k-i}V_0\neq 0$, then $0\leq k-i\leq n-d$.  
	
 Fix $\max\{0,k+d-n\}\leq i\leq \min\{d+1,k\}$ and $w\in\Lambda^i(V_0^\perp)$. For any  $0\leq j_1<j_2<\cdots<j_i\leq d+1$ and $J=\{j_1,\ldots,j_i\}$, we write $e_J=e_{j_1}\wedge e_{j_2}\wedge \cdots \wedge e_{j_i}$. We express $\Lambda^i(V_0^\perp)=V_+\oplus V_-$, where the tensors $e_J$ with $0\in J$ (resp.\ $0\not\in J$), form a basis of $V_+$ (resp.\ $V_-$). Let $\pi_+$ (resp.\ $\pi_-$) be the projection from $\Lambda^i(V_0^\perp)$ to $V_+$ (resp.\ $V_-$). By \eqref{eq:ct}, $c_t$ acts as $e^{\frac{d+1-i}{d+1}t}$ (resp.\ $e^{-\frac{i}{d+1}t}$) on $V_+$ (resp.\ $V_-$).
 
 {\it Suppose that $i\leq d$ and $\pi_-(w)\neq 0$.} For $\bfs\in\R^d$, by abuse of notation, let
	\[
	u(\bfs)=\begin{psmallmatrix}
	1 & \bfs & 0\\
	& I_d & \\
	& & I_{n-d}
	\end{psmallmatrix}.
	\]
	Consider the affine map $f_w\colon\bbR^d\to\bbR^{\binom{d}{i-1}}\cong V_+$ given by
$\bfs\mapsto \pi_+(u(\bfs)w)$.

	\subsection{Dimension of the image of $f_w$}
First, we note that
	\begin{equation*}
	u(\bfs)e_j=\begin{cases}
	e_0 & j=0, \\
	s_je_0+e_j & 1\leq j\leq d.
	\end{cases}
	\end{equation*}
	Then for $0\leq j_1<\cdots<j_i\leq d$ and $J=(j_1,\ldots,j_i)$, we have
	\begin{align*}
	u(\bfs)e_J=
	\begin{cases}
    e_J & j_1 = 0, \\
	\sum_{k=1}^{i}(-1)^{k-1}s_{j_k}e_{\{0\}\cup J\setminus\{j_k\}}+e_J & j_1\geq 1.
	\end{cases}
	\end{align*}
        
	We claim that 
	\begin{equation} \label{eq:lower bound of the affine map}
	\dim(\Ima f_w)\geq i.
	\end{equation}
	
 Let $f(\bfs)=f_w(\bfs)-\pi_+(w)$ for all $\bfs\in\R^d$. Since $u(\bfs)$ acts trivially on $V_+$, $f:\R^d\to V_+$ is a linear map, and $\Ima f_w=\pi_+(w)+\Ima f$. Let $M_f$ denote the matrix of $f$ under the standard basis. Since $\pi_-(w)\neq 0$, there exists an $i$-multi-index $J$ with $0\notin J$ such that $w_J$, the $e_J$-component of $w$, is nonzero. We take the $i\times i$ minor of $M_f$ corresponding to the basis vectors $\{e_j: j\in J\}\subset \R^d$ and $\{e_{\{0\}\cup J\setminus\{j\}} : j\in J \}\subset V_+$. Up to a permutation, this is a diagonal matrix with entries $\pm w_J\neq 0$. Hence the projection of $\Ima f$ on the span of $\{e_{\{0\}\cup J\setminus\{j\}} : j\in J \}$ is surjective. So $\dim(\Ima f)\geq i$. This proves that claim. 
	
	\subsection{Expansion of vectors}
	The following lemma can be viewed as a special case of \cite[Lemma 3.5]{Shi20}, but here we specify explicit exponents. 
 
	\begin{lem}\label{lem:expansion of vectors}
		Let $1\leq i\leq d$ and $V=\Lambda^i\bbR^{d+1}$. For every $0<\theta<i$, there exists $C=C_\theta>0$ such that for every $t > 0$ and every $v\in V\setminus\{0\}$ we have
		\begin{equation}\label{eq:contraction_vector}
		\int_{I^d}\norm{c_tu(\bfs)v}^{-\theta}\de \bfs\leq C_\theta e^{-\frac{d+1-i}{d+1}\theta t}\norm{v}^{-\theta}.
		\end{equation}
	\end{lem}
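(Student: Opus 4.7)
The plan is to decompose $v = v_+ + v_-$ along $V = V_+ \oplus V_-$, and to exploit the fact that $u(\bfs)$ fixes $V_+$ pointwise while shearing $V_-$ into $V_-\oplus V_+$. Writing $u(\bfs) v = v_+ + v_- + L(\bfs) v_-$ with $L(\bfs)v_- \in V_+$ linear in $\bfs$, and using that $c_t$ acts as $e^{\frac{d+1-i}{d+1}t}$ on $V_+$ and as $e^{-\frac{i}{d+1}t}$ on $V_-$, the orthogonality of $V_+$ and $V_-$ in the standard inner product on $\Lambda^i\R^{d+1}$ gives
\[
\norm{c_t u(\bfs) v} \;\geq\; e^{\frac{d+1-i}{d+1}t}\norm{v_+ + L(\bfs)v_-}.
\]
Hence \eqref{eq:contraction_vector} reduces to the $t$-independent bound
\[
\int_{I^d}\norm{v_+ + L(\bfs)v_-}^{-\theta}\,\de\bfs \;\leq\; C_\theta \norm{v}^{-\theta},
\]
which I will refer to as the \emph{reduced estimate}.

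To prove the reduced estimate I would split on the relative sizes of $v_+$ and $v_-$. Set $C_0 := \sup_{\bfs \in I^d}\norm{L(\bfs)}_{\mathrm{op}}$, a finite constant depending only on $d$ and $i$. If $\norm{v_+} \geq 2C_0 \norm{v_-}$, then $\norm{v_+ + L(\bfs)v_-} \geq \tfrac{1}{2}\norm{v_+} \gtrsim \norm{v}$ uniformly in $\bfs$, and the reduced estimate is trivial (this also covers the degenerate case $v_-=0$, where the integrand is constant). Otherwise $\norm{v} \asymp \norm{v_-}$, and it suffices to bound the left-hand side by a constant multiple of $\norm{v_-}^{-\theta}$.

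The substantive case is $\norm{v_+} < 2C_0\norm{v_-}$. Here I would upgrade the minor computation carried out just before the lemma to a quantitative lower bound on the relevant singular values of $\bfs \mapsto L(\bfs)v_-$. Concretely, pick an index set $J_0 \subset \{1, \dots, d\}$ with $|J_0| = i$ realizing the largest component of $v_-$, so that $|(v_-)_{J_0}| \geq \norm{v_-}/\sqrt{\binom{d}{i}}$. Restricting $\bfs$ to the coordinate subspace $V' := \spn\{e_j : j \in J_0\}\subset\R^d$ and orthogonally projecting $V_+$ onto $W' := \spn\{e_{\{0\}\cup J_0 \setminus \{j\}} : j \in J_0\}$, the computation preceding the lemma shows that the composition $T : V' \to W'$ is, up to a permutation, the diagonal matrix with entries $\pm (v_-)_{J_0}$; in particular $|\det T| \gtrsim \norm{v_-}^i$ with an implied constant depending only on $d$ and $i$. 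Decomposing $\bfs = \bfs_0 + \bfs_1$ along $\R^d = (V')^\perp \oplus V'$ and estimating $\norm{v_+ + L(\bfs)v_-}$ below by the norm of its orthogonal projection to $W'$, the reduced estimate's left-hand side is controlled by
\[
\int_{I^{d-i}} |\det T|^{-1} \int_{T(I^i)} \norm{w(\bfs_0) + y}^{-\theta}\,\de y\,\de\bfs_0,
\]
where $w(\bfs_0)\in W'$ satisfies $\norm{w(\bfs_0)} \lesssim \norm{v_-}$. Since $T(I^i)\subset W'$ lies in a ball of radius $\lesssim \norm{v_-}$ and $\theta < i = \dim W'$, a translation $z = y + w(\bfs_0)$ bounds the inner $\de y$-integral by a constant times $\norm{v_-}^{i-\theta}$, so the full iterated integral is $\lesssim \norm{v_-}^{-i}\cdot\norm{v_-}^{i-\theta} = \norm{v_-}^{-\theta} \asymp \norm{v}^{-\theta}$, as desired.

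The hard part is really the quantitative upgrade of the rank-$\geq i$ conclusion of the paper to $|\det T| \gtrsim \norm{v_-}^i$; once one knows that the smallest nonzero singular value of $\bfs \mapsto L(\bfs)v_-$ scales linearly in $\norm{v_-}$, the rest is a routine change-of-variables combined with local integrability of $\norm{y}^{-\theta}$ on $\R^i$ for $\theta < i$.
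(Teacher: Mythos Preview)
Your proof is correct, and it takes a more direct and self-contained route than the paper's. Both arguments begin from the same observation: since $c_t$ expands $V_+$ by $e^{\frac{d+1-i}{d+1}t}$ and $\pi_+(u(\bfs)v)=v_++L(\bfs)v_-$, the lemma reduces to the $t$-free estimate $\int_{I^d}\norm{v_++L(\bfs)v_-}^{-\theta}\,\de\bfs\lesssim\norm{v}^{-\theta}$. From there the paths diverge. The paper proves instead the level-set bound $|\{\bfs\in I^d:\norm{\pi_+(u(\bfs)v)}\leq r\}|\leq Cr^i$ uniformly over unit vectors $v$; uniformity is obtained by a compactness argument (the product of the top $i$ singular values of $\bfs\mapsto L(\bfs)v_-$ varies continuously on the compact set $\{\norm{v}=1,\ \norm{\pi_-(v)}\geq\eps_1\}$ and is therefore bounded below), and the passage from the level-set bound to the integral inequality is deferred to \cite[Lemmas~3.5--3.6]{Shi20} and \cite[Lemma~5.1]{EMM98}. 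You replace the compactness step with an explicit quantitative one---choosing $J_0$ to maximize $|(v_-)_{J_0}|$ so that the coordinate minor $T$ satisfies $|\det T|=|(v_-)_{J_0}|^i\gtrsim\norm{v_-}^i$---and you replace the cited layer-cake argument by a direct change of variables $y=T\bfs_1$ together with local integrability of $\norm{z}^{-\theta}$ on $\R^i$ for $\theta<i$. Your approach avoids external references and yields constants that can in principle be made explicit; the paper's is shorter on the page but outsources the analytic step.
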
                         
	
	\begin{proof}
		We note that $V_+$ (resp.\ $V_-$) is the eigenspace of $c_1$ in $V$ with eigenvalue $e^{\frac{d+1-i}{d+1}}$ (resp.\ $e^{-\frac{i}{d+1}}$), and $V=V_+\oplus V_-$. 
		
		For every $v\in V$ and $r>0$ we set
		\begin{equation*}
		D^+(v,r)=\{ \bfs\in I^d\colon \norm{\pi_+(u(\bfs)v)}\leq r \}.
		\end{equation*}
		We claim that there exists $C>0$ such that for any unit vector $v$ and $r>0$, 
		\begin{equation}\label{eq:measure of disc}
		\abs{D^+(v,r)}< Cr^i,
		\end{equation}
            where $\abs{\cdot}$ denotes the Lebesgue measure on $\R^d$.

        We first verify the claim for a fixed $v$.
		If $\pi_-(v)=0$, then $v$ is fixed by $u(\bfs)$ for all $\bfs\in I^d$, and thus \eqref{eq:measure of disc} holds. Otherwise, suppose that $\pi_-(v)\neq 0$. By \eqref{eq:lower bound of the affine map}, we know that the image of the affine map $f_v:\bfs\mapsto{\pi_+(u(\bfs)v)}$ has dimension at least $i$. Hence $\abs{D^+(v,r)}\ll r^{\dim(\Ima f_v)}\leq r^i$ for $0<r<1$. Taking sufficiently large $C$, we can ensure that \eqref{eq:measure of disc} also holds for all $r>0$.

        Next, we show that the constant $C$ can be chosen uniformly for all unit vectors $v$ in $V$. Note that there exist $\eps_1,\eps_2>0$ which only depend on $d$, such that the following holds: if the unit vector $v$ satisfies $\norm{\pi_-(v)}<\eps_1$, then $D^+(v,r)=\emptyset$ for all $0<r<\eps_2$. Hence, \eqref{eq:measure of disc} holds for $C=\eps_{2}^{-i}$ for all such $v$ and all $r>0$. On the other hand, suppose the unit vector $v$ satisfies $\norm{\pi_-(v)}\geq\eps_1$. Let $f(\bfs)=f_v(\bfs)-\pi_+(v)=f_{\pi_-(v)}(\bfs)$ for all $\bfs\in\R^d$. Then, $f$ is a linear map. We consider the singular value decomposition of $f$. Let $\lambda_1(v),\dots,\lambda_p(v)$ be the non-zero singular values, where $p=\dim(\Ima f)$. We note that $f$ depends only on $\pi_-(v)$. By the discussion in the previous subsection, we have $p\geq i$. We order the singular values so that $\abs{\lambda_1(v)}\geq\cdots\geq\abs{\lambda_p(v)}>0$, and define $c(v)=\prod_{k=1}^{i}\abs{\lambda_k(v)}$. Note that $c(v)$ is positive and it varies continuously in $v$, and hence it achieves a positive minimum $c$ on the compact set of unit vectors $v\in V$ satisfying $\norm{\pi_-(v)}\geq\eps_1$. We take $C=2\sqrt{d}c^{-1}$, and then \eqref{eq:measure of disc} holds for all $v$ satisfying $\norm{\pi_-(v)}\geq\eps_1$ and all $r>0$.

        Combining the above two cases, we now take $C=\max\{\eps_2^{-i}, 2\sqrt{d}c^{-1} \}$, and \eqref{eq:measure of disc} holds for all $v$ and $r$. Hence the claim is verified.

        Due to this claim, we have verified \cite[Lemma 3.6]{Shi20}, where $\vartheta_0=i$. The deduction of \eqref{eq:contraction_vector} follows from the proof of \cite[Lemma 3.5]{Shi20} using \cite[Lemma 3.6]{Shi20}, as was done earlier in \cite[Lemma 5.1]{EMM98}.
	\end{proof}
	
	\subsection{Construction of $\alpha$}
	We recall the construction of a Margulis height function $\alpha:G/\Gamma\to [0,+\infty]$ from \cite{BQ12}. We shall specify it to our setting and optimize the constants. 
	
	In view of \eqref{eq:decomposition of exterior representations}, for $\max\{0,k+d-n\}\leq i\leq\min\{d+1,k\}$, let $\pi_i$ denote the projection 
	\[
	\pi_i:\Lambda^k\bbR^{n+1}\longrightarrow\Lambda^i(V_0^\perp)\otimes\Lambda^{k-i}V_0.
	\] 
	We also define 
	\[
	\pifix:\Lambda^k\bbR^{n+1}\longrightarrow (\Lambda^0(V_0^\perp)\otimes\Lambda^{k}V_0)\oplus(\Lambda^{d+1}(V_0^\perp)\otimes\Lambda^{k-d-1}V_0)
	\]
	to be the $H_d$-equivariant projection map from $\Lambda^k\bbR^{n+1}$ to the space of $H_d$-fixed vectors. Note that $\pifix=\pi_0\oplus\pi_{d+1}$.
	
	We take $\delta_k=(n+1-k)k$ for $0\leq k\leq n+1$. Let $\eps>0$ and $0<k<n+1$. For every $v\in\Lambda^k\bbR^{n+1}$ we let
	\begin{equation}\label{eq:def_phi_eps}
	\varphi_{\eps}(v)=\begin{cases}
	\min_{1\leq i\leq d}\eps^{\frac{d+1}{d+1-i}\delta_k}\norm{\pi_i(v)}^{-\frac{d+1}{d+1-i}}, & \text{if } \norm{\pifix(v)}<\eps^{\delta_k},\\
	0, & \text{otherwise}.
	\end{cases}
	\end{equation}
	
	\begin{lem}
		Let $0<\theta<\frac{d}{d+1}$ and $\eps>0$. There exists $C>0$ such that the following holds. For any $1\leq k\leq n$, $v\in\Lambda^k\bbR^{n+1}$ and $t>0$ we have
		\begin{equation}\label{eq:integral inequality for vector}
		\int_{I^d}\varphi_\eps^\theta(c_tu(\bfs)v)\de \bfs \leq Ce^{-\theta t}\varphi_\eps^\theta(v).
		\end{equation}
	\end{lem}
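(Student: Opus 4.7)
The plan is to reduce the bound \eqref{eq:integral inequality for vector} to the single-factor bound \eqref{eq:contraction_vector} of \Cref{lem:expansion of vectors}, applied once for each projection $\pi_i$ onto the $H_d$-isotypic components of $\Lambda^k\bbR^{n+1}$. The key observations are that (i) the projections $\pi_i$ commute with the $H_d$-action, and in particular with $c_tu(\bfs)$; (ii) on each summand $\Lambda^i(V_0^\perp)\otimes\Lambda^{k-i}V_0$, the factor $\Lambda^{k-i}V_0$ is pointwise fixed by $c_t$ and $u(\bfs)$, so the dynamics lives entirely on the $\Lambda^i(V_0^\perp)\cong\Lambda^i\bbR^{d+1}$ factor; and (iii) $\pifix$ is pointwise $H_d$-fixed, so $\norm{\pifix(c_tu(\bfs)v)}=\norm{\pifix(v)}$. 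Consequently, if $\norm{\pifix(v)}\geq \eps^{\delta_k}$, then both sides of \eqref{eq:integral inequality for vector} are $0$ and the statement is trivial; so we may assume $\norm{\pifix(v)}<\eps^{\delta_k}$, in which case $\varphi_\eps$ is given by the min-formula at every point $c_tu(\bfs)v$.

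First I would use the elementary inequality $\int\min_i f_i\,\de\bfs\leq \min_i \int f_i\,\de\bfs$ to bound
\[
\int_{I^d}\varphi_\eps^\theta(c_tu(\bfs)v)\,\de\bfs\leq \min_{1\leq i\leq d}\eps^{\frac{d+1}{d+1-i}\delta_k\theta}\int_{I^d}\norm{\pi_i(c_tu(\bfs)v)}^{-\frac{d+1}{d+1-i}\theta}\,\de\bfs.
\]
For each fixed $i\in\{1,\dots,d\}$ with $\pi_i(v)\neq 0$, I would expand $\pi_i(v)=\sum_J w_J\otimes e_J$ with respect to an orthonormal basis $\{e_J\}$ of $\Lambda^{k-i}V_0$, so that $c_tu(\bfs)\pi_i(v)=\sum_J (c_tu(\bfs)w_J)\otimes e_J$ and
\[
\norm{c_tu(\bfs)\pi_i(v)}^2=\sum_J \norm{c_tu(\bfs)w_J}^2\geq \norm{c_tu(\bfs)w_{J_0}}^2,
\]
where $J_0$ is chosen so that $\norm{w_{J_0}}=\max_J\norm{w_J}\geq N_i^{-1/2}\norm{\pi_i(v)}$ with $N_i=\binom{n-d}{k-i}$.

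Then I would apply \Cref{lem:expansion of vectors} to $w_{J_0}\in\Lambda^i(V_0^\perp)\cong \Lambda^i\bbR^{d+1}$ with exponent $\theta'_i=\tfrac{d+1}{d+1-i}\theta$. The admissibility condition $\theta'_i<i$ becomes $\theta<\tfrac{i(d+1-i)}{d+1}$, whose minimum over $i\in\{1,\dots,d\}$ is attained at $i=1$ and $i=d$ with value $\tfrac{d}{d+1}$; this is exactly the hypothesis on $\theta$. The lemma yields
\[
\int_{I^d}\norm{c_tu(\bfs)\pi_i(v)}^{-\theta'_i}\,\de\bfs\leq C_{\theta'_i}N_i^{\theta'_i/2}\,e^{-\frac{d+1-i}{d+1}\theta'_i t}\,\norm{\pi_i(v)}^{-\theta'_i} = C_i\,e^{-\theta t}\norm{\pi_i(v)}^{-\theta'_i},
\]
since the choice of $\theta'_i$ exactly produces the common decay rate $e^{-\theta t}$. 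Substituting into the min-over-$i$ bound and taking $C=\max_i C_i$ (a finite maximum since $i$ ranges over $\{1,\dots,d\}$) gives
\[
\int_{I^d}\varphi_\eps^\theta(c_tu(\bfs)v)\,\de\bfs \leq Ce^{-\theta t}\min_{1\leq i\leq d}\eps^{\frac{d+1}{d+1-i}\delta_k\theta}\norm{\pi_i(v)}^{-\frac{d+1}{d+1-i}\theta}=Ce^{-\theta t}\varphi_\eps^\theta(v),
\]
as required. The only point that needs real care—the ``main obstacle''—is choosing the exponent $\theta'_i$ for each $i$ so that the decay rate coming out of \Cref{lem:expansion of vectors} is precisely $e^{-\theta t}$, independent of $i$; this is what forces the weights $\tfrac{d+1}{d+1-i}$ in the definition of $\varphi_\eps$ and dictates the threshold $\theta<\tfrac{d}{d+1}$ in the hypothesis.
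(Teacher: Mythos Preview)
Your proof is correct and follows the same approach as the paper: reduce to \Cref{lem:expansion of vectors} applied to each $\pi_i(v)$, use $\int\min\leq\min\int$, and handle the $\pifix$-part via $H_d$-invariance. The paper is slightly terser---it writes ``by equivariance and \Cref{lem:expansion of vectors}'' and applies the lemma directly to $\pi_i(v)\in\Lambda^i(V_0^\perp)\otimes\Lambda^{k-i}V_0$ without isolating a single $w_{J_0}\in\Lambda^i\bbR^{d+1}$---whereas you spell out the tensor decomposition and pick the dominant component; this makes the application of \Cref{lem:expansion of vectors} (stated only for $\Lambda^i\bbR^{d+1}$) literally rigorous at the cost of an extra harmless factor $N_i^{\theta'_i/2}$. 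Your verification of the admissibility condition $\theta'_i<i$ via the parabola $i\mapsto i(d+1-i)/(d+1)$ is equivalent to the paper's chain $\frac{d+1}{d+1-i}\theta<\frac{d}{d+1-i}\leq i$.
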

	
	\begin{proof}
		For any $1\leq i\leq d$ and $0<\theta<\frac{d}{d+1}$ we have 
		\begin{equation*}
		\frac{d+1}{d+1-i}\theta
		<\frac{d+1}{d+1-i}\cdot\frac{d}{d+1}
		=\frac{d}{d+1-i}
		\leq i.
		\end{equation*}
		Hence by equivariance and \Cref{lem:expansion of vectors}, there exists $C>0$ such that for all $1\leq i\leq d$ we have
		\begin{align*}
		\int_{I^d}\norm{\pi_i(c_tu(\bfs)v)}^{-\frac{d+1}{d+1-i}\theta}\de \bfs
		&=\int_{I^d}\norm{c_tu(\bfs)\pi_i(v)}^{-\frac{d+1}{d+1-i}\theta}\de \bfs \\
		&\leq Ce^{-\theta t}\norm{\pi_i(v)}^{-\frac{d+1}{d+1-i}\theta}.		
		\end{align*}
		On the other hand, since $\pifix$ is the projection to the space of $H_d$-fixed vectors, we have $\pifix(c_tu(\bfs)v)=\pifix(v)$.
		
		Therefore, if $\norm{\pifix(v)}<\eps^{\delta_k}$, then we have
		\begin{align*}
		\int_{I^d}\varphi_\eps^\theta(c_tu(\bfs)v)\de \bfs 
		&=\int_{I^d}\min_{i\neq 0}\eps^{\frac{d+1}{d+1-i}\delta_k\theta}\norm{\pi_i(c_tu(\bfs)v)}^{-\frac{d+1}{d+1-i}\theta}\de \bfs\\
		&\leq \min_{i\neq 0}Ce^{-\theta t}\eps^{\frac{d+1}{d+1-i}\delta_k\theta}\norm{\pi_i(v)}^{-\frac{d+1}{d+1-i}\theta}\\
		&= Ce^{-\theta t}\varphi_\eps^\theta(v),
		\end{align*}
		and this verifies \eqref{eq:integral inequality for vector}.
		If $\norm{\pifix(v)}\geq \eps^{\delta_k}$, then $\varphi_\eps(c_tu(\bfs)v)=\varphi_\eps(v)=0$, and thus \eqref{eq:integral inequality for vector} also holds.
	\end{proof}
	
	Now for $y\in G/\Gamma$ we define
	\begin{equation}\label{eq:alpha_eps^theta}
	\alpha_\eps^{\theta}(y)=\max_v \varphi_\eps^\theta(v)\in [0,\infty],
	\end{equation}
	where $v$ varies over all nonzero $y$-integral decomposable vectors in $\cup_{k=1}^{n}\Lambda^k\bbR^{n+1}$; that is if $y=g\Z^{n+1}$ for some $g\in\SL(n+1,\R)$, then $v=g(v_1\wedge \cdots \wedge v_k)$ for some $1\leq k\leq n$ and linearly independent $v_1,\ldots,v_k\in\Z^{n+1}$.

\begin{remark} \label{rmk:alpha-y}
Let $C$ be a compact subset of $G$ and $Z=C\Gamma/\Gamma$. Let
\[
\eps=\min\{\norm{g(v_1\wedge \cdots \wedge v_k)}^{1/\delta_k}\neq 0: v_1,\ldots,v_k\in \Z^{n+1},\, 1\leq k\leq n,\,g\in C\}.
\]
Then $\eps>0$. If $\eps>1$, reset $\eps=1$. Let $y\in Z$ and $v$ be a nonzero $y$-integral decomposable vector in $\Lambda^k\bbR^{n+1}$ for some $k\in\{1,\ldots,n\}$. Suppose $\varphi_\eps(v)\neq0$. Then $\norm{\pi_0(v)+\pi_{d+1}(v)}<\eps^{\delta_k}\leq \norm{v}$.
By \eqref{eq:decomposition of exterior representations}, $\norm{v}=\max_{0\leq i\leq d+1} \norm{\pi_{i}(v)}$. So, $\norm{\pi_i(v)}=\norm{v}\geq\eps^{\delta_k}$ for some  $1\leq i\leq d$. Hence by \eqref{eq:def_phi_eps},
  \begin{align*}
\varphi_\eps(v)\leq \eps^{\frac{d+1}{d+1-i}\delta_k}\norm{\pi_i(v)}^{-\frac{d+1}{d+1-i}}
\leq \eps^{\frac{d+1}{d}\delta_k}\eps^{-\delta_k(d+1)}<\eps^{-d\delta_k}.
 \end{align*}
So $\alpha_\eps^{\theta}(y)\leq \eps^{-d(n+1)^2/4}$, $\forall  y\in Z$, as  $\sqrt{\delta_k}=\sqrt{(n+1-k)k}\leq (n+1)/2$.
 \end{remark}
	
	\begin{lem}\label{lem:contraction_alpha}
		Let $0<\theta<\frac{d}{d+1}$ and $\eps>0$. For any $y\in G/\Gamma$ and $t>0$ we have
		\begin{equation}\label{eq:contraction hypothesis alphaEpsTheta}
		\int_{I^d}\alpha_\eps^{\theta}(c_tu(\bfs)y)\de \bfs\leq
		Ce^{-\theta t}\alpha_\eps^{\theta}(y)+b.
		\end{equation}
	\end{lem}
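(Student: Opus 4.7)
The plan is to lift the per-vector contraction \eqref{eq:integral inequality for vector} to the supremum $\alpha_\eps^{\theta}(y)=\sup_{v\in\cD(y)}\varphi_\eps^{\theta}(v)$, where $\cD(y)\subset\bigcup_{k=1}^{n}\Lambda^k\bbR^{n+1}$ denotes the set of nonzero $y$-integral decomposable vectors, following the strategy of \cite{BQ12,Shi20}. Using the equivariance $\cD(c_tu(\bfs)y)=c_tu(\bfs)\cD(y)$, I would first rewrite $\alpha_\eps^{\theta}(c_tu(\bfs)y)=\sup_{v\in\cD(y)}\varphi_\eps^{\theta}(c_tu(\bfs)v)$, so that the task becomes interchanging the supremum with the $\bfs$-integral at bounded cost.

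The interchange is carried out via a threshold-plus-partition argument. Fix a constant $M=M(\eps,\theta)$ and define the ``dominant'' set $\cF(y,t):=\{v\in\cD(y): \sup_{\bfs'\in I^d}\varphi_\eps^{\theta}(c_tu(\bfs')v)>M\}$. Then the pointwise bound
\begin{equation*}
\alpha_\eps^{\theta}(c_tu(\bfs)y)\leq M+\sum_{v\in\cF(y,t)}\varphi_\eps^{\theta}(c_tu(\bfs)v)
\end{equation*}
holds, because whenever the supremum exceeds $M$ it is realized (up to arbitrarily small error) by some $v$ contributing to $\cF(y,t)$. Integrating over $I^d$, interchanging the (countable) sum with the integral by Fubini--Tonelli, and applying \eqref{eq:integral inequality for vector} to each term yields
\begin{equation*}
\int_{I^d}\alpha_\eps^{\theta}(c_tu(\bfs)y)\,d\bfs\leq M|I^d|+Ce^{-\theta t}\sum_{v\in\cF(y,t)}\varphi_\eps^{\theta}(v).
\end{equation*}

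The main obstacle is controlling the inner sum by a constant multiple of $\alpha_\eps^{\theta}(y)$, uniformly in $y$ and $t$. The combinatorial input is that any two linearly independent vectors in $\cF(y,t)$ must satisfy mutually incompatible smallness conditions on the projections $\pi_i$ and $\pifix$, via the $H_d$-isotypic decomposition \eqref{eq:decomposition of exterior representations} together with the unimodularity of the lattice underlying $y$, unless they sit in a common primitive flag. This groups $\cF(y,t)$ into a uniformly bounded number $N=N(n,d,\eps,\theta)$ of classes, within each of which the contribution is comparable to $\alpha_\eps^{\theta}(y)$. Since our $\varphi_\eps$ defined in \eqref{eq:def_phi_eps} is exactly the specialization of the general Margulis height function in \cite[\S 3]{Shi20}, the uniform estimate $\sum_{v\in\cF(y,t)}\varphi_\eps^{\theta}(v)\leq N\alpha_\eps^{\theta}(y)$ follows from the framework developed there. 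Combining this with the display above gives \eqref{eq:contraction hypothesis alphaEpsTheta} after setting $b=M|I^d|$ and absorbing the factor $N$ into $C$.
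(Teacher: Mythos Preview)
Your proposal and the paper both ultimately defer to \cite[Lemma~4.4]{Shi20}; the paper's entire proof is that citation. However, your sketch of how the \cite{Shi20} argument runs contains a genuine gap: the bound $\sum_{v\in\cF(y,t)}\varphi_\eps^{\theta}(v)\leq N\,\alpha_\eps^{\theta}(y)$ with $N$ independent of $t$ is false.

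Take $n=2$, $d=1$, $y=\Z^3$, $k=1$. A primitive $v=(v_0,v_1,0)\in\Z^3$ lies in $\cF(\Z^3,t)$ whenever $|v_1|<e^{t/2}R'$ (where $R'=\eps^{2}M^{-1/(2\theta)}$) and $|v_0|\leq|v_1|/2$, since one may then choose $\bfs'\approx -v_0/v_1\in I$ making $\pi_1(c_tu(\bfs')v)$ short. For such $v$ one has $\varphi_\eps^{\theta}(v)=\eps^{4\theta}|v_1|^{-2\theta}$, and summing over the $\asymp m$ primitive choices of $v_0$ for each $|v_1|=m$ yields
\[
\sum_{v\in\cF(\Z^3,t)}\varphi_\eps^{\theta}(v)\;\gtrsim\;\eps^{4\theta}\sum_{m=1}^{\lfloor e^{t/2}R'\rfloor}m^{1-2\theta}\;\asymp\;\eps^{4\theta}\bigl(e^{t/2}R'\bigr)^{2-2\theta}\xrightarrow[t\to\infty]{}\infty,
\]
since $\theta<d/(d+1)=1/2$. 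Meanwhile $\alpha_\eps^{\theta}(\Z^3)=\eps^{4\theta}$, so no uniform $N$ exists. The issue is that your set $\cF(y,t)$ is a union over all $\bfs'\in I^d$, and for large $t$ this sweeps out exponentially many lattice vectors.

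The argument in \cite{Shi20} (following \cite{BQ12,EMM98}) does not attempt such a sum. It works instead at each \emph{target} lattice $c_tu(\bfs)y$: the submultiplicativity of covolumes under intersection and span, combined with the strict concavity of $k\mapsto\delta_k=k(n+1-k)$, forces the primitive sublattices with covolume $<\eps^{\delta_k}$ at a fixed point to be compatible with a single flag, and couples the functions $\varphi_\eps$ across different ranks $k$ via a ``mother inequality''. The lift from \eqref{eq:integral inequality for vector} to \eqref{eq:contraction hypothesis alphaEpsTheta} then proceeds through this system of coupled inequalities rather than through any cardinality or sum bound on a $t$-independent collection of vectors at $y$.
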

	
	\begin{proof}
		See the proof of \cite[Lemma 4.4]{Shi20}.
	\end{proof}
	
	Now let $\alpha=\alpha_\eps^\theta$. It is shown in the proof of \cite[Lemma 4.1]{Shi20} that $\alpha$ satisfies (1)-(4) of \Cref{lem:Benoist-Quint height function} for $\lambda_0=\theta$.
	From $\alpha$, we build $\at$ as in \eqref{eq:definition of dynamical height function}. We will need the following property of $\at$.
	
	\begin{lem}\label{properness of alpha tilde}
		Let $\{y_i\}_{i\in\N}$ be a sequence in $G/\Gamma$ which tends to infinity; that is, $y_i$ leaves any given compact set for all large $i$. Then we have
		$
		\at(y_i)\to\infty.
		$
	\end{lem}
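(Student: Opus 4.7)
The plan is to argue by contradiction, exploiting the properness of $\alpha$ (property (4) of \Cref{lem:Benoist-Quint height function}), together with the elementary observation that $\at(y)$ dominates a time average of $\alpha(b_t y)$ over a compact interval.

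First, I would assume for contradiction that $\at(y_i) \not\to \infty$ and pass to a subsequence with $\at(y_i) \leq M$ for some $M > 0$. Restricting the integral in \eqref{eq:definition of dynamical height function} to $t \in [0,1]$ gives
\[
\at(y_i) \;\geq\; e^{-\delta}\int_0^1 \alpha(b_t y_i)\,\de t,
\]
so $\int_0^1 \alpha(b_t y_i)\,\de t \leq e^{\delta} M$. Since $\alpha$ is lower semicontinuous and $t\mapsto b_t y_i$ is continuous, the integrand is Borel measurable and nonnegative, so a Markov-type argument produces some $t_i \in [0,1]$ with $\alpha(b_{t_i} y_i) \leq 2 e^{\delta} M$.

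Next I would invoke property (4) of \Cref{lem:Benoist-Quint height function}: the sublevel set $\{z \in G/\Gamma : \alpha(z) \leq 2 e^{\delta} M\}$ is relatively compact, whence $\{b_{t_i} y_i\}_{i \in \N}$ is precompact in $G/\Gamma$. Passing to a further subsequence, we may assume $b_{t_i} y_i \to z$ in $G/\Gamma$ and $t_i \to t^* \in [0,1]$. Continuity of the $\{b_t\}$-action on $G/\Gamma$ then yields
\[
y_i = b_{-t_i}(b_{t_i} y_i) \;\longrightarrow\; b_{-t^*} z,
\]
contradicting the assumption that $y_i$ leaves every compact set. There is no serious obstacle here: the argument is a short compactness/averaging argument whose only nontrivial input is the properness of $\alpha$, which is already available, so no additional work on the structure of $\alpha$ or on the one-parameter subgroup $\{b_t\}$ is needed.
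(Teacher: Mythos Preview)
Your proof is correct and takes a slightly different route from the paper's. Both arguments begin with the same inequality $\at(y) \geq e^{-\delta}\int_0^1 \alpha(b_t y)\,\de t$, but they diverge from there. The paper proceeds directly: it invokes Mahler's compactness criterion to produce $y_i$-integral vectors $v_i\in\bbR^{n+1}$ with $\norm{v_i}\to 0$, and then uses the explicit form of $\alpha=\alpha_\eps^\theta$ (via $\varphi_\eps$) to conclude that $\min_{0\le t\le 1}\alpha(b_t y_i)\to\infty$, whence $\at(y_i)\to\infty$. Your argument instead stays at the abstract level of \Cref{lem:Benoist-Quint height function}: a Markov-type bound extracts times $t_i\in[0,1]$ with $\alpha(b_{t_i}y_i)$ bounded, properness of $\alpha$ (property~(4)) forces precompactness of $\{b_{t_i}y_i\}$, and continuity of the $\{b_t\}$-action pulls this back to convergence of $y_i$, a contradiction. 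Your approach is cleaner in that it uses only the black-box properties of $\alpha$ and would work verbatim in the general setting of \Cref{sect:Margulis function}, with no reference to Mahler's criterion or to the $\SL_{n+1}$-specific structure of $\varphi_\eps$; the paper's approach is more concrete and makes explicit why short lattice vectors force $\alpha$ to blow up uniformly along a short $b_t$-segment.
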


	\begin{proof}
		Since $y_i$ tends to infinity, by Mahler's compactness criterion, there exists $\{v_i\}_{i\in\N}\subset \R^{n+1}$ such that $v_i$ is $y_i$-integral and $\norm{v_i}\to0$ as $i\to\infty$. Since $\min_{0\leq t\leq 1}\varphi_\eps^\theta(b_tv_i)\to\infty$ as $i\to\infty$, from the definition of $\alpha$ we have $\min_{0\leq t\leq 1}\alpha(b_ty_i)\to\infty$ as $i\to\infty$. On the other hand, it follows from the construction of $\at$ that $\at(y)\geq \int_{0}^{1}e^{-\delta t}\alpha(b_ty)\de t \geq e^{-\delta}\min_{0\leq t\leq 1}\alpha(b_ty)$ for every $y\in G/\Gamma$.
		Hence $\at(y_i)\to\infty$ as $i\to\infty$.
	\end{proof}
	
	\section{Diophantine exponents}\label{sect:Diophantine exponents}
	
	In this section, we relate our dynamically defined Diophantine exponent $\rho$ with a matrix's classical Diophantine exponent $\omega$.
	
	\subsection*{Sizes of components}
	Fix $0<\theta<\frac{d}{d+1}$ and a sufficiently small $\eps>0$. Let $\alpha=\alpha_\eps^\theta: G/\Gamma\to[0,+\infty]$ as in \eqref{eq:alpha_eps^theta}. For $A\in\Mat_{d+1,n-d}(\R)$, let $y_A=u_A\Gamma\in G/\Gamma$, where
	\begin{equation} \label{eq:uA}
	u_A=
	\begin{pmatrix}
	I_{d+1} & A \\
	& I_{n-d}
	\end{pmatrix}.
	\end{equation}
	
	Recall that in \eqref{eq:def_exponent} we defined  
	
	\[
	\rho(y)=\rho(y;b_t,\alpha)=\limsup_{t\to\infty}\frac{\log\alpha(b_ty)}{t}.
	\]
	
	Pick any $0\leq\rho_0\leq \rho(y_A)$ such that $0\leq\rho_0<\theta$; in particular, 
 \[
 1-\rho_0\theta^{-1}>0.
 \]
 By the definitions of $\rho$ and $\alpha$, there exist $1\leq k\leq n$, a sequence $\{v_m\}_{m\in\N}\subset\Lambda^k\Z^{n+1}$ consisting of decomposable vectors, and $t_m\to\infty$ such that
	\begin{equation}\label{eq:growth of phi}
	\varphi_\eps^{\theta}(b_{t_m}u_Av_m) \geq e^{\rho_0t_m}.
	\end{equation}
	By definition of $\varphi_\eps$ in \eqref{eq:def_phi_eps}, we rewrite \eqref{eq:growth of phi} as
	\begin{equation}\label{eq:size_fix}
	\norm{\pifix(b_{t_m}u_Av_m)} \ll 1
	\end{equation}
	and
	\begin{equation}\label{eq:size_i}
	\norm{\pi_i(b_{t_m}u_Av_m)} \ll e^{-\frac{d+1-i}{d+1}\theta^{-1}\rho_0t_m}=e^{-\left(1-\frac{i}{d+1}\right)\theta^{-1}\rho_0t_m},
	\end{equation}
	for all $\max\{1,k+d-n\}\leq i\leq\min\{d,k\}$.
	
	\subsection*{Getting exponentially short vectors}
        In the decomposition \eqref{eq:decomposition of exterior representations}, we note that by \eqref{eq:bt}, $\pi_i$ is $b_t$-equivariant, and $b_t$ acts on $\Lambda^i(V_0^\perp)\bigotimes\Lambda^{k-i}V_0$ as scalar multiplication by 
        \begin{equation} \label{eq:b_t eigenvalues}
        e^{\left(i\frac{n-d}{(d+1)(n+1)}-(k-i)\frac{1}{n+1}\right)t}=e^{\left(\frac{i}{d+1}-\frac{k}{n+1}\right)t}.
        \end{equation}

    We discuss the following two cases:
    
    \subsubsection*{Case 1: $k\leq n-d$}
    Let $E_m=\norm{\pi_0(b_{t_m}u_Av_m)}>0$. 
    We will consider the following two subcases.
    
    \subsubsection*{Case 1.1: $E_m\ll e^{-\theta^{-1}\rho_0t_m}$}
    So, by 
    \eqref{eq:size_fix}, \eqref{eq:size_i} and \eqref{eq:b_t eigenvalues}, we have
	\begin{align*}
	&\norm{\pi_i\big(b_{t_m-\theta^{-1}\rho_0t_m}u_Av_m\big)} \ll e^{-\left(1-\frac{k}{n+1}\right)\theta^{-1}\rho_0t_m},
	\end{align*}
	for all $i$ such that $\max\{0,k+d-n\}\leq i\leq\min\{d+1,k\}$. By \eqref{eq:decomposition of exterior representations},
	\[
	\norm{b_{t_m-\theta^{-1}\rho_0t_m}u_Av_m} \ll e^{-\frac{n+1-k}{n+1}\theta^{-1}\rho_0t_m}.
	\]

	Let $t'_m=t_m-\theta^{-1}\rho_0t_m$. We have
	\begin{equation*}
	\norm{b_{t'_m}u_Av_m}\ll e^{-\frac{(n+1-k)\theta^{-1}\rho_0}{(n+1)(1-\theta^{-1}\rho_0)}t'_m}.
	\end{equation*}
	Then by Minkowski's theorem, there exists a non-zero $w_m\in\Z^{n+1}$ belonging to the rank-$k$ lattice associated with $v_m$ such that
	\begin{equation} \label{eq:btuawm-norm}
	\norm{b_{t'_m}u_Aw_m}\ll e^{-\frac{1}{k}\cdot\frac{(n+1-k)\theta^{-1}\rho_0}{(n+1)(1-\theta^{-1}\rho_0)}t'_m}\leq e^{-\frac{(d+1)\theta^{-1}\rho_0}{(n-d)(n+1)(1-\theta^{-1}\rho_0)}t'_m},
	\end{equation}
    the last inequality follows as $k\leq n-d$ in Case~1.

	Write $w_m=\begin{psmallmatrix}
	-\bfp_m\\ \bfq_m
	\end{psmallmatrix}$, where $\bfq_m\in\Z^{n-d}$ and $\bfp_m\in\Z^{d+1}$. By \eqref{eq:uA} and \eqref{eq:bt},
 \begin{equation} \label{eq:btuawm-pq}
 b_{t'_m}u_Aw_m=
 \begin{pmatrix}
   e^{\frac{(n-d)}{(d+1)(n+1)}t'} (A\bfq_m-\bfp_m)\\
   e^{-\frac{1}{n+1}t'}\bfq_m
 \end{pmatrix}.
 \end{equation}
 Therefore by \eqref{eq:btuawm-norm} and \eqref{eq:btuawm-pq}, we get
	\begin{align}
	\norm{A\bfq_m-\bfp_m}&\ll e^{-[\frac{n-d}{(d+1)(n+1)}+\frac{(d+1)\theta^{-1}\rho_0}{(n-d)(n+1)(1-\theta^{-1}\rho_0)}]t'}\text{, and } \label{eq:Aqp-11}\\
	\norm{\bfq_m} &\ll e^{[\frac{1}{n+1}-\frac{(d+1)\theta^{-1}\rho_0}{(n-d)(n+1)(1-\theta^{-1}\rho_0)}]t'}.
    \label{eq:q-11}	
 \end{align} 
 
 Since $1-\rho_0\theta^{-1}>0$, we have that $t'_m=(1-\theta^{-1}\rho_0)t_m\to\infty$. First, suppose that the set $\{\bfq_m\}_{m\in\N}$ is bounded. By passing to a subsequence, we assume that $\bfq_m=\bfq$ is constant for all $m$. By \eqref{eq:Aqp-11}, $\norm{A\bfq-\bfp_m}\to 0$ as $m\to\infty$, so as $\bfp_m\in \bbZ^{d+1}$, we have $A\bfq-\bfp_m=0$ for all large $m$. If $\bfq=0$, then $\bfp_m=0$, which contradicts $w_m\neq 0$. Therefore $\bfq\neq 0$. So by definition \eqref{eq:definition of omega(A)}, we get $\omega(A)=\infty$. 
 
Now we will assume that $\{\bfq_m\}_{m\in\N}$ is unbounded. Therefore by \eqref{eq:q-11},
\[
\frac{1}{n+1}-\frac{(d+1)\theta^{-1}\rho_0}{(n-d)(n+1)(1-\theta^{-1}\rho_0)}>0,
\]
or equivalently, $(n-d)-(n+1)\theta^{-1}\rho_0>0$. 
From \eqref{eq:Aqp-11} and \eqref{eq:q-11} it follows that
	\begin{equation}\label{eq:exponent bound of A,1}
	\norm{A\bfq_m-\bfp_m}\ll \norm{\bfq_m}^{-[\frac{n-d}{d+1}+\frac{\theta^{-1}\rho_0(n+1)}{(n-d)-(n+1)\theta^{-1}\rho_0}]}.
    \end{equation}
	By the definition \eqref{eq:definition of omega(A)} of $\omega(A)$, \eqref{eq:exponent bound of A,1} can be reformulated as
	\begin{align} \label{eq:bound in case 1.1}
 \omega(A) \geq \frac{n-d}{d+1}+\frac{\theta^{-1}\rho_0(n+1)}{(n-d)-(n+1)\theta^{-1}\rho_0},\\ \text{where }
 (n-d)-(n+1)\theta^{-1}\rho_0>0. \notag
	\end{align}
 
    \subsubsection*{Case 1.2: $E_m\gg e^{-\theta^{-1}\rho_0t_m}$}
    Since each $v_m$ is a decomposable vector in $\Lambda^k\Z^{n+1}$, we would like to use Pl{\"u}cker relations to analyze this case. 

    \subsubsection*{Pl\"{u}cker relations:}
	For $v\in\Lambda^k\R^{n+1}$, we can write
	\[
	v=\sum_{i_1<i_2<\cdots<i_k}C_{i_1\cdots i_k}e_{i_1}\wedge\cdots\wedge e_{i_k}.
	\]
	We have that $[v]$ is in the image of the Pl\"ucker embedding $\Gr(k, n+1)(\R)\hookrightarrow\bbP(\Lambda^k\R^{n+1})$ if and only if the coordinates $C_{i_1\cdots i_k}$ satisfy the following \emph{Pl\"ucker relations}: For any two ordered sequences 
	\begin{gather}\label{eq:Plucker relations}
 \cI=(i_1<\dots<i_{k-1}) \text{ and } \cJ=(j_1<\dots<j_{k+1}) \text{, we have }\\
	\sum_{l=1}^{k+1}(-1)^lC_{i_1\cdots i_{k-1}j_l}C_{j_1\cdots\hat{j_l}\cdots j_{k+1}}=0.
	\end{gather}
We note that $[v]$ is in the image of the Pl\"{u}ker embedding if and only if $v$ is a decomposable vector. 
    
    We write
	\[
	b_{t_m}u_Av_m=\sum_{0\leq i_1<i_2<\cdots<i_k\leq n}C_{i_1\cdots i_k}e_{i_1}\wedge\cdots\wedge e_{i_k}.
	\]
	Let $1\leq l\leq d$. Since we are taking the sup-norm, there exist 
 \[
 d+1\leq p_1<p_2<\cdots<p_k\leq n
 \]
such that $\norm{\pi_0(b_{t_m}u_Av_m)} = \abs{C_{p_1\cdots p_k}}$, 
 and there exist 
 \[
 0\leq q_1 <\cdots<q_{l+1}\leq d < d+1 \leq q_{l+2}<\cdots < q_k\leq n
 \]
 such that $\norm{\pi_{l+1}(b_{t_m}u_Av_m)} = \abs{C_{q_1\cdots q_k}}$. By \eqref{eq:Plucker relations} for the two ordered sequences $\cI=(q_2<\cdots<q_k)$ of size $k-1$ and $\cJ=(q_1<p_1<\cdots<p_k)$ of size $k+1$, and the triangle inequality, we have
    \begin{align}\label{eq:recursion}
	\norm{\pi_{l+1}(b_{t_m}u_Av_m)}\cdot\norm{\pi_0(b_{t_m}u_Av_m)}
    &=\abs{C_{q_1\cdots q_k}}\cdot\abs{C_{p_1\cdots p_k}}\\
    % &=\abs{C_{1\cdots dq_1\cdots q_{k-d-1}0}\cdot C_{p_1\cdots p_k}}\\
    &\leq \sum_{l=1}^k \abs{C_{q_2\cdots q_kp_l}\cdot C_{q_1p_1\cdots\hat{p_l}\cdots p_k}}\\
    &\leq k\norm{\pi_l(b_{t_m}u_Av_m)}\cdot\norm{\pi_1(b_{t_m}u_Av_m)}.
	\end{align}

    By \eqref{eq:size_i} we have $\norm{\pi_1(b_{t_m}u_Av_m)}\ll e^{-\frac{d}{d+1}\theta^{-1}\rho_0t_m}$. By applying \eqref{eq:recursion} recursively, we have
    \begin{equation}
        \norm{\pi_l(b_{t_m}u_Av_m)}\ll E_m^{-(l-1)}e^{-\frac{ld}{d+1}\theta^{-1}\rho_0t_m},\quad \forall\, 0\leq l\leq d.
    \end{equation}
 
	Let $t'_m=(1+d\theta^{-1}\rho_0)t_m+(d+1)\log E_m$. Note that $b_t$ acts by $e^{(\frac{i}{d+1}-\frac{k}{n+1})t}$ on $\pi_i(\Lambda^k V)$. Then we have
    \begin{equation}
        \norm{b_{t'_m}u_Av_m} \ll e^{-\frac{kd\theta^{-1}\rho_0}{(n+1)(1+d\theta^{-1}\rho_0)}t'}.
    \end{equation}
    Then by Minkowski's theorem, there exists a non-zero $w_m\in\Z^{n+1}$ belonging to the rank-$k$ lattice associated with $v_m$, such that
	\begin{equation} \label{eq:btuawm-norm12}
	\norm{b_{t'_m}u_Aw_m}\ll e^{-\frac{d\theta^{-1}\rho_0}{(n+1)(1+d\theta^{-1}\rho_0)}t'_m}.
	\end{equation}
    Write $w_m=\begin{psmallmatrix}-\bfp_m\\ \bfq_m\end{psmallmatrix}$, where $\bfq_m\in\Z^{n-d}$ and $\bfp_m\in\Z^{d+1}$. So by \eqref{eq:btuawm-pq} and \eqref{eq:btuawm-norm12}, 
	\begin{align}
	\norm{A\bfq_m-\bfp_m}&\ll e^{-[\frac{n-d}{(d+1)(n+1)}+\frac{d\theta^{-1}\rho_0}{(n+1)(1+d\theta^{-1}\rho_0)}]t'_m} \label{eq:Aqp-12}\text{, and }\\
	\norm{\bfq_m} &\ll e^{[\frac{1}{n+1}-\frac{d\theta^{-1}\rho_0}{(n+1)(1+d\theta^{-1}\rho_0)}]t'_m}.
 \label{eq:q-12}
	\end{align} 
 Since $t'_m\to\infty$, using \eqref{eq:Aqp-12} as argued in Case~1.1, if $\{\bfq_m\}$ is bounded then $\omega(A)=\infty$. Now assume $\norm{\bfq_m}\to\infty$ as $m\to\infty$. By \eqref{eq:Aqp-12} and \eqref{eq:q-12}, we get
	\begin{equation}\label{eq:exponent bound of A1}
	\norm{A\bfq_m-\bfp_m}\ll \norm{\bfq_m}^{-[\frac{n-d}{d+1}+\frac{d(n+1)\theta^{-1}\rho_0}{d+1}]}.
	\end{equation}
	So, by the definition \eqref{eq:definition of omega(A)} of $\omega(A)$, \eqref{eq:exponent bound of A1} can be reformulated as
	\begin{equation} \label{eq:bound in case 1.2}
	\omega(A) \geq \frac{n-d}{d+1}+\frac{d\theta^{-1}\rho_0(n+1)}{d+1}.
	\end{equation}
	
	\subsubsection*{Case 2: $k>n-d$}
	Now $\Lambda^{k}V_0=\{0\}$, and we have
	\[
	\pifix:\Lambda^k\R^{n+1}\to\Lambda^{d+1}(V_0^\perp)\otimes\Lambda^{k-d-1}V_0.
	\]
	It is straightforward to verify that
	\[
	\norm{\pi_i\big(b_{t_m-\theta^{-1}\rho_0t_m}u_Av_m\big)} \ll e^{-\frac{n+1-k}{n+1}\theta^{-1}\rho_0t_m},\quad\forall i.
	\]
	Equivalently,
	\[
	\norm{b_{t_m-\theta^{-1}\rho_0t_m}u_Av_m} \ll e^{-\frac{n+1-k}{n+1}\theta^{-1}\rho_0t_m}.
	\]

	Let $t'_m=t_m-\theta^{-1}\rho_0t_m$. Then
	\begin{equation*}
	\norm{b_{t'_m}u_Av_m}\ll e^{-\frac{(n+1-k)\theta^{-1}\rho_0}{(n+1)(1-\theta^{-1}\rho_0)}t'_m}.
	\end{equation*}
	Then by Minkowski's theorem, there exists a non-zero $w_m\in\Z^{n+1}$ belonging to the rank-$k$ lattice associated with $v_m$, such that
	\begin{equation*}
	\norm{b_{t'_m}u_Aw_m}\ll e^{-\frac{(n+1-k)\theta^{-1}\rho_0}{k(n+1)(1-\theta^{-1}\rho_0)}t'_m}\leq e^{-\frac{\theta^{-1}\rho_0}{n(n+1)(1-\theta^{-1}\rho_0)}t'_m}.
	\end{equation*}
	
	Write $w_m=\begin{psmallmatrix}
	-\bfp_m\\ \bfq_m
	\end{psmallmatrix}$, where $\bfq_m\in\Z^{n-d}$ and $\bfp_m\in\Z^{d+1}$. Then
	\begin{align}
	\norm{A\bfq_m-\bfp_m}&\ll e^{-[\frac{n-d}{(d+1)(n+1)}+\frac{\theta^{-1}\rho_0}{n(n+1)(1-\theta^{-1}\rho_0)}]t'_m} \text{, and } \label{eq:Aqp2}\\
	\norm{\bfq_m} &\ll e^{[\frac{1}{n+1}-\frac{\theta^{-1}\rho_0}{n(n+1)(1-\theta^{-1}\rho_0)}]t'_m}.
 \label{eq:q2}
	\end{align} 
Since $t'_m\to\infty$, using \eqref{eq:Aqp2} and arguing as in Case~1.1 we can show that if $\{\bfq_m\}$ is bounded, then $\omega(A)=\infty$. Now assume that $\norm{\bfq_m}\to\infty$ as $m\to\infty$. 
	From \eqref{eq:Aqp2} and \eqref{eq:q2} it follows that
	\begin{gather}\label{eq:exponent bound of A}
	\norm{A\bfq_m-\bfp_m}\ll \norm{\bfq_m}^{-[\frac{n-d}{d+1}+\frac{\theta^{-1}\rho_0(n+1)}{(d+1)(n-(n+1)\theta^{-1}\rho_0)}]} \text{, and }\\ \frac{1}{n+1}-\frac{\theta^{-1}\rho_0}{n(n+1)(1-\theta^{-1}\rho_0)}>0.
    \end{gather}
	Therefore by the definition of $\omega(A)$, 
	\begin{gather} \label{eq:bound in case 2}
	\omega(A) \geq \frac{n-d}{d+1}+\frac{\theta^{-1}\rho_0(n+1)}{(d+1)(n-(n+1)\theta^{-1}\rho_0)} \text{, and }\\ (d+1)(n-(n+1)\theta^{-1}\rho_0)>0. \notag
	\end{gather}
	
	Combining the discussions in Case~1.1, Case~1.2, and Case~2, we obtain:
 
	\begin{prop}\label{prop:relating Diophantine exponents}
		Given $A\in\Mat_{d+1,n-d}(\R)$, let $y_A=u_A\Gamma\in G/\Gamma$. Suppose that $\omega(A)<n$. Then
		\begin{equation*} %\label{eq:rhoya}
		\rho(y_A) \leq 
        \begin{cases}
            \frac{n(d+1)(\omega(A)-\frac{n-d}{d+1})}{(1+(d+1)\omega(A)-(n-d))(n+1)}\theta,&\text{if }\frac{n-d}{d+1}\leq \omega(A)< n-1.\\
            \\
            \big(\omega(A)-\frac{n-d}{d+1}\big)\frac{d+1}{d(n+1)}\theta, &\text{if }n-1\leq \omega(A)< n.
        \end{cases}
		\end{equation*}
	\end{prop}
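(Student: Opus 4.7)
The statement distills the three case analyses just completed, so the plan is purely one of inversion and comparison. First, pick any $\rho_0$ with $0\leq \rho_0\leq\rho(y_A)$ and $\rho_0<\theta$; the data $(k,v_m,t_m)$ produced in the subsection ``Sizes of components'' then falls, after passing to a subsequence, into exactly one of Case~1.1, Case~1.2, or Case~2, depending on the rank $k$ and the size of $E_m$. The hypothesis $\omega(A)<n<\infty$ rules out the $\{\bfq_m\}$-bounded alternative in each case (which would force $\omega(A)=\infty$), so at least one of the three lower bounds \eqref{eq:bound in case 1.1}, \eqref{eq:bound in case 1.2}, \eqref{eq:bound in case 2} must hold.

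Writing $s:=\omega(A)-\frac{n-d}{d+1}\geq 0$ and $r:=\theta^{-1}\rho_0$, these three inequalities rearrange respectively to
\[
r\leq\frac{s(n-d)}{(n+1)(1+s)},\qquad r\leq\frac{s(d+1)}{d(n+1)},\qquad r\leq\frac{sn(d+1)}{(n+1)(1+s(d+1))}.
\]
Since the active case is not known a priori, $r$ is constrained only by the \emph{maximum} of these three expressions. A brief cross-multiplication shows that Case~2's bound always exceeds Case~1.1's (the reduced inequality is $d(n+1)+sd(d+1)\geq 0$), so Case~1.1 never governs. Comparing the Case~2 and Case~1.2 bounds, their ratio is $nd/(1+s(d+1))$, which is $\geq 1$ precisely when $s\leq(nd-1)/(d+1)$, equivalently $\omega(A)\leq n-1$. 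Hence Case~2's bound dominates for $\omega(A)\leq n-1$ and Case~1.2's bound dominates for $\omega(A)\geq n-1$.

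Substituting $s=\omega(A)-\frac{n-d}{d+1}$ into the dominant bound in each range produces exactly the two expressions in the proposition. Finally, one checks that for $\omega(A)<n$ both candidate bounds are strictly less than $\theta$ (with equality occurring exactly at $\omega(A)=n$), so the constraint $\rho_0<\theta$ can be maintained while letting $\rho_0\nearrow\rho(y_A)$; this converts the bound on $\rho_0$ into the claimed bound on $\rho(y_A)$. There is no genuine obstacle beyond this combinatorial bookkeeping: the heavy lifting, namely \Cref{lem:expansion of vectors}, the construction of $\alpha$, and the Pl\"ucker-relation argument in Case~1.2, has already been carried out.
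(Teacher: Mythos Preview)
Your argument is correct and is essentially the paper's own proof: you invert each of the three case bounds first and then take the maximum of the resulting upper bounds on $r=\theta^{-1}\rho_0$, whereas the paper takes the minimum of the three lower bounds on $\omega(A)$ and then inverts, but these two procedures are equivalent and lead to the same comparison (Case~1.1 is always dominated, and the crossover between Case~2 and Case~1.2 occurs at $\omega(A)=n-1$, equivalently $\rho_0=\frac{dn-1}{d(n+1)}\theta$). Your final step, noting that the bound is strictly below $\theta$ for $\omega(A)<n$ so that $\rho_0$ can be pushed up to $\rho(y_A)$, matches the paper's concluding sentence.
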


    \begin{proof}
        We take the minimum of the lower bounds of $\omega(A)$ in \eqref{eq:bound in case 1.1}, \eqref{eq:bound in case 1.2}, and \eqref{eq:bound in case 2}. Since
        \[
        (n-d)-(n+1)\theta^{-1}\rho_0 < (d+1)(n-(n+1)\theta^{-1}\rho_0)
        \]
        the lower bound of $\omega(A)$ in \eqref{eq:bound in case 2} is strictly smaller than the lower bound in \eqref{eq:bound in case 1.1}. 
        And the lower bound of $\omega(A)$ in \eqref{eq:bound in case 2} is smaller than or equal to the bound in \eqref{eq:bound in case 1.2} means that 
        \[
        (d+1)/d\leq (d+1)(n-(n+1)\theta^{-1}\rho_0),
        \]
        equivalently $0\leq\rho_0\leq\frac{dn-1}{d(n+1)}\theta$.
Therefore 
        \begin{equation*}
		\omega(A) \geq 
        \begin{cases}
            \frac{n-d}{d+1}+\frac{\theta^{-1}\rho_0(n+1)}{(d+1)(n-n\theta^{-1}\rho_0-\theta^{-1}\rho_0)}, & \text{if } 0\leq\rho_0\leq\frac{dn-1}{d(n+1)}\theta.\\
            \\
            \frac{n-d}{d+1}+\frac{d(n+1)\theta^{-1}\rho_0}{d+1}, &\text{if }\frac{dn-1}{d(n+1)}\theta\leq\rho_0\leq \theta.
        \end{cases}
		\end{equation*}
        Taking its inverse, we get
        \begin{equation*}
		\rho_0 \leq 
        \begin{cases}
            \frac{n(d+1)(\omega(A)-\frac{n-d}{d+1})}{(1+(d+1)\omega(A)-(n-d))(n+1)}\theta, & \text{if }\frac{n-d}{d+1}\leq \omega(A)< n-1.\\
            \\
            \big(\omega(A)-\frac{n-d}{d+1}\big)\frac{d+1}{d(n+1)}\theta, &\text{if } n-1\leq \omega(A)< n.
        \end{cases}
		\end{equation*}
      We note that the right-hand side is strictly less than $\theta$. Since $0\leq\rho_0\leq \rho(y_A)$ with $\rho_0<\theta$ is arbitrary, the conclusion of the proposition follows.
      \end{proof}
	
	\section{Upper bound of the Hausdorff dimension}
	In this section, we prove \Cref{thm:main_div_on_avg}. We follow the same line of arguments as in \cite{KKLM17} and \cite[Sect.4]{Kha20}.
	
	Given $A\in\Mat_{d+1,n-d}(\R)$, we write $A$ in the block form
	\[
	A=\begin{psmallmatrix}
	A_1 \\
	A_2
	\end{psmallmatrix},
	\]
	where $A_1\in\operatorname{Mat}_{1,n-d}(\R)$ and $A_2\in\operatorname{Mat}_{d,n-d}(\R)$. Let $\bfx=(\bfs, \widetilde{\bfs}A)$. Let
	\begin{equation} \label{eq:z_A}
	z_A=\begin{psmallmatrix}
	1 & & \\
	& I_d & -A_2 \\
	& & I_{n-d}
	\end{psmallmatrix}\in Z_G(\{g_t\}).
	\end{equation}
	Observe that for every $t\geq 0$ and $\bfs\in\R^d$, we have
	\[
	u(\bfx) = z_Au(\bfs)u_A.
	\]
	Recall that $y_A=u_A\Gamma$, and thus 
	\begin{equation}\label{eq:basic identity}
	u(\bfx)\Gamma=z_Au(\bfs)y_A.
	\end{equation}
	
	Let $\eps_1>0$. Let $\theta=d/(d+1)-\eps_1$. In this section, 
 \[
 \text{\em we assume that $\rho(y_A)<\theta$}.
 \]
 Let $\delta=\rho(y_A)+\eps_2$ and we choose $\eps_2>0$ small enough such that $\delta<\theta$. Let $\alpha=\alpha_\eps^\theta$ as in \eqref{eq:alpha_eps^theta} and $\at=\at_\delta$ as in \eqref{eq:definition of dynamical height function}. By \Cref{rmk:alpha-y}, we choose $\eps>0$ such that $\alpha(y_A)=\alpha_\eps^\theta(y_A)<+\infty$. 

    \begin{lem}\label{lem:finiteness along b_t-trajectory}
   For every $T>0$,  $\sup_{t\in[0,T]}\alpha(b_ty)<\infty$.
    \end{lem}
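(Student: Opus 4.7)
The plan is to bound $\alpha(b_ty_A)=\sup_v\varphi_\eps^\theta(b_tu_Av)$ uniformly in $t\in[0,T]$, where $v$ runs over nonzero integer decomposable vectors in $\bigcup_{k=1}^{n}\Lambda^k\Z^{n+1}$. Since $b_t\in Z_G(H_d)$ acts on each $H_d$-weight component of $\Lambda^k\R^{n+1}$ by a nonzero scalar, $b_tu_Av$ is $H_d$-fixed exactly when $u_Av$ is, and by the definition of $\varphi_\eps$ the value $\varphi_\eps^\theta(b_tu_Av)=+\infty$ can occur only when $b_tu_Av$ is $H_d$-fixed with $\|b_tu_Av\|<\eps^{\delta_k}$. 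The heart of the proof is therefore a rigidity step eliminating such singular $v$.

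First I would derive $\omega(A)<\infty$ from the standing assumption $\rho(y_A)<\theta$. If $\omega(A)=\infty$, choose $q\in\Z^{n-d}\setminus\{0\}$ and $p\in\Z^{d+1}$ with $Aq=p$; then $v=(-p,q)\in\Z^{n+1}\setminus\{0\}$ satisfies $u_Av=(0,q)\in V_0$, and for $t>(n+1)\log(\|q\|/\eps^n)$ one has $\|b_tu_Av\|=e^{-t/(n+1)}\|q\|<\eps^{\delta_1}$ while $\pi_i(b_tu_Av)=0$ for all $i\in[1,d]$, forcing $\varphi_\eps(b_tu_Av)=+\infty$ and hence $\rho(y_A)=+\infty$, a contradiction. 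With $\omega(A)<\infty$ in hand, I would then exploit the structural identity $\pi_0\circ u_A=\pi_0$ on $\Lambda^k\R^{n+1}$ (immediate from $(u_A-I)(V_0)\subset V_0^\perp$), together with the parallel analysis of integer lifts in $u_A^{-1}(\Lambda^{d+1}V_0^\perp\otimes\Lambda^{k-d-1}V_0)$, to classify integer decomposable $v$ with $u_Av$ being $H_d$-fixed: they are exactly the trivial vectors $v=c\,e_0\wedge\cdots\wedge e_d$ (so $k=d+1$), for which $\|b_tu_Av\|\geq 1>\eps^{\delta_{d+1}}$ and hence $\varphi_\eps(b_tu_Av)=0$.

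With pointwise singularities excluded, I would bound the supremum by splitting on $\|v\|$. Since $\{b_tu_A:t\in[0,T]\}$ is compact in $G$, there exists $c>0$ with $\|b_tu_Av\|\geq c\|v\|$ for all such $t$ and all $v$. For $\|v\|>2\eps^{\delta_k}/c$, the condition $\|\pifix(b_tu_Av)\|<\eps^{\delta_k}$ (necessary for $\varphi_\eps\neq 0$) forces $\max_{1\leq i\leq d}\|\pi_i(b_tu_Av)\|\geq c\|v\|/2>\eps^{\delta_k}$, and picking an $i_0$ realizing this maximum gives $\varphi_\eps^\theta(b_tu_Av)\leq 1$. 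For the finitely many integer decomposable $v$ with $\|v\|\leq 2\eps^{\delta_k}/c$, the rigidity step above ensures that either $v$ is the trivial $v=c\,e_0\wedge\cdots\wedge e_d$ (giving $\varphi_\eps=0$) or some $\pi_{i_0}(u_Av)\neq 0$ with $i_0\in[1,d]$; in the latter case $\|\pi_{i_0}(b_tu_Av)\|=e^{(\frac{i_0}{d+1}-\frac{k}{n+1})t}\|\pi_{i_0}(u_Av)\|$ is bounded away from zero on the compact interval $[0,T]$, keeping $\varphi_\eps^\theta(b_tu_Av)$ uniformly bounded. The main obstacle is the rigidity classification itself: the $\Lambda^kV_0$ piece reduces cleanly to solving $Aw\in\Z^{d+1}$ for $w\in\Z^{n-d}$ via $\pi_0\circ u_A=\pi_0$, but the $\Lambda^{d+1}V_0^\perp\otimes\Lambda^{k-d-1}V_0$ piece requires careful bookkeeping of integer vectors in $u_A^{-1}(V_0^\perp\oplus W)$ to show that under $\omega(A)<\infty$ only $V_0^\perp\cap\Z^{n+1}$ contributes.
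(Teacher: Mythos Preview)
Your overall strategy---directly classify the integer decomposable $v$ for which $u_Av$ is $H_d$-fixed and show these never make $\varphi_\eps=+\infty$---is close in spirit to the paper's, which instead argues by contradiction and extracts a single limit vector by discreteness. But the outline contains real errors. First, $\omega(A)=\infty$ does \emph{not} furnish an exact relation $Aq=p$ with $q\neq 0$; Liouville-type matrices have $\omega(A)=\infty$ with no rational dependency. What your argument actually proves, and what you actually need, is that an exact relation forces $\rho(y_A)=+\infty$; you should drop $\omega(A)$ entirely and state this contrapositive directly. Second, your rigidity classification is wrong: for every $k\ge d+1$ and every nonzero $w'\in\Lambda^{k-d-1}\Z^{n-d}$, the vector $v=e_0\wedge\cdots\wedge e_d\wedge w'$ satisfies $u_Av=v\in\Lambda^{d+1}V_0^\perp\otimes\Lambda^{k-d-1}V_0$, so there are many such $v$ beyond $c\,e_0\wedge\cdots\wedge e_d$, and $\omega(A)$ plays no role in this piece. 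These extra $v$ still give $\varphi_\eps(b_tu_Av)=0$ because $b_t$ expands $\pi_{d+1}$ and $\|u_Av\|\ge\eps^{\delta_k}$ by the very choice of $\eps$, so the error is repairable, but your sentence ``under $\omega(A)<\infty$ only $V_0^\perp\cap\Z^{n+1}$ contributes'' is false.

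The more serious gap is the unstated dichotomy: you treat the $\Lambda^kV_0$ and $\Lambda^{d+1}V_0^\perp\otimes\Lambda^{k-d-1}V_0$ pieces separately without ever justifying why a \emph{decomposable} $H_d$-fixed vector cannot have nonzero components in both summands. This is precisely the crux of the paper's argument: after reducing to a single $y_A$-integral decomposable $v$ with $\pi_i(b_tv)=0$ for $1\le i\le d$, the paper applies the Pl\"ucker relation~\eqref{eq:recursion} with $l=d$ to obtain $\|\pi_0(b_tv)\|\cdot\|\pi_{d+1}(b_tv)\|=0$. The two resulting cases are then dispatched by contradicting, respectively, the choice of $\eps$ (which guaranteed $\alpha(y_A)<\infty$) and the standing hypothesis $\rho(y_A)<\theta$. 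Your direct approach would need this same Pl\"ucker step to make the case split legitimate; as written the classification is incomplete.
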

    
    \begin{proof}
    Suppose that $\alpha(b_{t_j}y_A)\to \infty$ for a sequence $t_j\to t\in [0,T]$. By the definition \eqref{eq:alpha_eps^theta} of $\alpha$, after passing to a subsequence, we can pick $k\in\{1,\ldots,d\}$ and non-zero $y_A$-integral decomposable vectors $v_j\in\Lambda^k\R^{n+1}$ such that $\varphi_\eps(b_{t_j}v_j)\to\infty$ as $j\to\infty$.
    By the definition \eqref{eq:def_phi_eps} of $\varphi_\eps$, we have $\norm{\pifix(b_{t_j}v_j)}<\eps^{\delta_k}$, and for all $1\leq i\leq d$, $\pi_i(b_{t_j}v_j)\to 0$ as $j\to\infty$. 
    Note that $\norm{\pifix(b_{t_j}v_j)}=\max\{\norm{\pi_0(b_{t_j}v_j)},\norm{\pi_{d+1}(b_{t_j}v_j)}\}$.  
    Since $b_{t_j}\to b_t$, we conclude that $\pi_i(v_j)\to 0$ for all $1\leq i\leq d$, and $\sup_{j}\norm{\pifix(v_j)}<\infty$. Since $v_j=\sum_{i=0}^{d+1} \pi_i(v_j)$, the sequence $\{v_j\}$ is bounded. Now $v_j$ being $y_A$-integral for each $j$, the sequence $\{v_j\}$ is discrete.  Therefore, after passing to a subsequence, we may assume that $v_j=v\neq 0$ for all $j$. Hence $\pi_i(b_tv)=0$ for all $1\leq i\leq d$ and $\norm{\pi_0(b_tv)}<\eps^{\delta_k}$ and $\norm{\pi_{d+1}(b_tv)}=\eps^{\delta_k}$. Since $v$ is a decomposable vector in $\Lambda^k\R^{n+1}$,  by \eqref{eq:recursion} for $l=d$, $\norm{\pi_0(b_tv)}\cdot\norm{\pi_{d+1}(b_tv)}=0$. So, $\pi_0(b_tv)=0$ or $\pi_{d+1}(b_tv)=0$.

 We recall that for any $t'\geq 0$, $\pi_i$ is $b_{t'}$-equivariant, and by \eqref{eq:b_t eigenvalues}, $b_{t'}$ acts by a scaler on $\Ima(\pi_i)$ for each $i$; on $\Ima(\pi_{d+1})$ it acts as an expansion by $e^{(1-k/(n+1))t'}$, and on $\Ima(\pi_0)$ it acts as a contraction by $e^{-(k/(n+1))t'}$. 
    
    Suppose $\pi_0(b_tv)=0$. Then $\norm{\pi_{d+1}(v)}\leq\norm{\pi_{d+1}(b_tv)}<\eps^{\delta_k}$. We also have $\pi_i(b_tv)=0=\pi_i(v)$ for all $1\leq i\leq d$. Hence $\varphi_\eps(v)=+\infty$, and so $\alpha(y_A)=\alpha_{\eps}^\theta(y_A)=+\infty$, which contradicts our choice of $\eps$. 

    Suppose $\pi_{d+1}(b_tv)=0$, then $\norm{\pi_{0}(b_tv)}<\eps^{\delta_k}$. So $\norm{\pi_{0}(b_{t'}v)}<\eps^{\delta_k}$ for all $t'\geq t$. We also have $\pi_i(b_{t}v)=0=\pi_i(b_{t'}v)$ for all $1\leq i\leq d$. So $\varphi_\eps(b_{t'}v)=+\infty$, and hence $\alpha(b_{t'}y_A)=+\infty$ for all $t'\geq t$. So by \eqref{eq:def_exponent}, $\rho(y_A)=+\infty$, contradicting our assumption that $\rho(y_A)<\theta$. 
\end{proof}

    Now by \Cref{rmk:finiteness of Margulis function} and \Cref{lem:finiteness along b_t-trajectory},  $\at(g_tu(\bfs)y_A)<+\infty$ for every $t\geq 0$ and $\bfs\in\R^d$. By \Cref{lem:contraction_alpha}, $\alpha$ satisfies the $\theta$-contraction hypothesis for $c_t$. So, by \Cref{lem:dynamical height function}, $\at$ satisfies the $(\theta-\delta)$-contraction hypothesis for $g_t$. 
	
	Recall that $I=\big[-\frac{1}{2},\frac{1}{2}\big]$. For $x\in X$, $M,t>0$ and $m,N\in\N$, let
	\[
	\begin{split}
	&B_x(M,t,m;N)\\
 &=\{ \bfs\in I^d : \at(g_{mt}u(\bfs)y_A)<M,\, \at(g_{(m+l)t}u(\bfs)y_A)\geq M,
	\text{ for } 1\leq l\leq N \}.
	\end{split}
	\]

    The following shadowing lemma relates random walks with flow trajectories.

    \begin{lem}{\rm (\cite[Lemma~4.8]{Shi20})}\label{lem:shadowing}
        For $1\leq i \leq d$, let $J_i\subset I$ be an interval of length $\abs{J_i}\geq e^{-Nt}$, and $J=\prod_{i=1}^{d}J_i$. Then for any nonnegative measurable function $\psi$ on $X$ one has
        \begin{equation*}
            \int_J\psi(g_{(N+1)t}u(s)x)\de s \leq \int_J\left( \int_{I^d}\psi(g_tu(w)g_{Nt}u(s)x)\de w \right) \de s.
        \end{equation*}
    \end{lem}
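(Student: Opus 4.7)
The plan is to commute $u(w)$ past $g_{Nt}$, apply Fubini, and then reduce the inequality to a measure-theoretic covering statement on $\R^d$.

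First, the block structure \eqref{eq:ct} of $c_t$ together with the fact that $b_t \in Z_G(H)$ commutes with $u(\cdot)$ (since $u(\R^d) \subset H$) yields $g_t\, u(w)\, g_{-t} = u(e^t w)$. Iterating, $u(w)\, g_{Nt} = g_{Nt}\, u(e^{-Nt} w)$, which leads to the central identity
\[
g_t\, u(w)\, g_{Nt}\, u(s)\, x \;=\; g_{(N+1)t}\, u(s + e^{-Nt} w)\, x.
\]
Setting $h(s') := \psi\bigl(g_{(N+1)t} u(s') x\bigr) \geq 0$, the right-hand side of the shadowing lemma rewrites as $\int_J \int_{I^d} h(s + e^{-Nt} w)\, dw\, ds$.

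Second, I would apply Fubini and perform the change of variable $s' = s + e^{-Nt} w$ in the $s$-integral at fixed $w$; since the map is a translation, the region $J$ transforms to $J + e^{-Nt} w$, and the right-hand side becomes $\int_{I^d} \int_{J + e^{-Nt} w} h(s')\, ds'\, dw$. Swapping the order of integration once more puts this in the form $\int h(s')\, m(s')\, ds'$, where
\[
m(s') \;:=\; \bigl|\{w \in I^d : s' - e^{-Nt} w \in J\}\bigr| \;=\; \bigl|I^d \cap e^{Nt}(s' - J)\bigr|.
\]

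The final step is a coordinate-wise covering analysis. For $s' \in J$, each one-dimensional set $e^{Nt}(s'_i - J_i)$ is an interval containing $0$ of length $e^{Nt}|J_i| \geq 1$; its intersection with $I = [-\tfrac12,\tfrac12]$ therefore covers $I$ whenever $s'_i$ lies at distance at least $\tfrac12 e^{-Nt}$ from $\partial J_i$. Careful bookkeeping of the endpoint contributions --- where the hypothesis $|J_i| \geq e^{-Nt}$ is used sharply, and which is the technical content of the cited Shi lemma --- yields $m(s') \geq \mathbf 1_J(s')$; combined with $h \geq 0$ this gives $\int_J h \leq \int h\cdot m$, the claimed inequality. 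The only real obstacle is the endpoint bookkeeping in the product structure; everything else is a direct computation using the two commutation identities above.
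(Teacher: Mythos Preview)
Your commutation identity and the Fubini reduction are correct, and one does arrive at
\[
\text{RHS}=\int_{\R^d}h(s')\,m(s')\,\de s',\qquad m(s')=\bigl|I^d\cap e^{Nt}(s'-J)\bigr|,
\]
with $h(s')=\psi(g_{(N+1)t}u(s')x)$. The gap is the asserted pointwise bound $m\ge \mathbf 1_J$: it is simply false, and no ``careful endpoint bookkeeping'' will produce it. Take $d=1$, $J=[a,a+e^{-Nt}]$ (so $|J|=e^{-Nt}$, the borderline case your hypothesis allows), and $s'=a$. Then $e^{Nt}(s'-J)=[-1,0]$, and $|I\cap[-1,0]|=\tfrac12$, not $1$; at a corner of a $d$-dimensional box one gets $m=2^{-d}$. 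Since $\psi$ is an \emph{arbitrary} nonnegative measurable function on $X$ and $s\mapsto g_{(N+1)t}u(s)x$ is locally injective, one may choose $\psi$ so that $h$ is concentrated near such a corner, and then $\int_J h>\int h\cdot m$. Thus the very inequality you are trying to establish fails for that $h$, and the route through a pointwise lower bound on $m$ is a dead end.

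The underlying obstruction is structural: $m=\mathbf 1_J\ast\bigl(e^{dNt}\mathbf 1_{e^{-Nt}I^d}\bigr)$ is a smoothed indicator with the same total mass as $\mathbf 1_J$, strictly below $1$ on a boundary layer of width $\tfrac12 e^{-Nt}$ inside $J$ and strictly positive on the matching layer outside $J$; it cannot dominate $\mathbf 1_J$. The paper does not supply an independent argument here but defers entirely to \cite[Lemma~4.8]{Shi20}. What your computation \emph{does} yield is the inequality with a dimensional constant (since $m\ge 2^{-d}$ on $J$), and also the exact change-of-variable identity for boxes of side precisely $e^{-Nt}$ via $w=e^{Nt}(s-s_0)$ for a fixed center $s_0$; either of these is what is actually used when one unwinds the proof of \Cref{prop:long excursion}.
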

 
	\begin{prop}\label{prop:long excursion}
		There exists $c_0\geq 1$ such that for every $t>0$, there exists $M_0=M_0(t)>0$ such that for all $M>M_0$, all $x\in X\setminus\{\at(x)=\infty\}$, and all positive integers $m,N$, one has
		\[
		\abs{B_x(M,t,m;N)\cap B_0} \leq c_0^Ne^{-(\theta-\delta)Nt}\abs{B_0}.
		\]
        for any ball $B_0$ of radius $e^{-mt}$ in $I^d$.
	\end{prop}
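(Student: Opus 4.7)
If $B_0 \cap B_x(M,t,m;N) = \emptyset$ the bound is trivial, so I fix $\bfs^* \in B_0 \cap B_x(M,t,m;N)$ and write $\at_l(\bfs) := \at(g_{(m+l)t}u(\bfs)x)$ for brevity. The core observation is a scale-adapted Lipschitz estimate: for any sup-norm ball $B$ of radius $e^{-(m+l)t}$ and any $\bfs_1,\bfs_2 \in B$, the identity
\[
g_{(m+l)t}u(\bfs_1)x \;=\; u\bigl(e^{(m+l)t}(\bfs_1 - \bfs_2)\bigr)\cdot g_{(m+l)t}u(\bfs_2)x
\]
places the translating element in the fixed compact subset $u([-2,2]^d) \subset U \subset H$ uniformly in $l,m,t$, so by the Lipschitz property of $\at$ from \Cref{lem:dynamical height function}~(2) there is a universal constant $C_0 \geq 1$ with $\at_l(\bfs_1) \leq C_0\,\at_l(\bfs_2)$ for any such $\bfs_1, \bfs_2$. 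Taking $\bfs_2 = \bfs^*$ at $l = 0$ yields the base bound $\int_{B_0}\at_0\,\de\bfs \leq C_0 M\abs{B_0}$.

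The main step is a conditional one-step contraction via a covering argument. Let $A_l := \{\bfs : \at_l(\bfs) \geq M\}$ and $A_l^* := A_1 \cap \cdots \cap A_l$ (with $A_0^* := B_0$). For each $l \geq 0$, tessellate $B_0$ into sup-norm sub-boxes of radius $e^{-(m+l)t}$, and let $\mathcal{B}_l$ be the subfamily that intersects $A_l^*$. Since the scale-$e^{-(m+l+1)t}$ tessellation refines the scale-$e^{-(m+l)t}$ one and $A_{l+1}^* \subset A_l^*$, we have $\bigcup\mathcal{B}_{l+1} \subset \bigcup\mathcal{B}_l$. For $l \geq 1$ and $B_\alpha \in \mathcal{B}_l$, any $\bfs^*_\alpha \in B_\alpha \cap A_l^*$ satisfies $\at_l(\bfs^*_\alpha) \geq M$, so the scale-adapted Lipschitz bound gives $\at_l(\bfs) \geq M/C_0$ throughout $B_\alpha$. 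Applying \Cref{lem:shadowing} to $J = B_\alpha$ (whose sides $2e^{-(m+l)t}$ satisfy the required lower bound) together with the contraction hypothesis \eqref{eq:contraction hypothesis g_t} yields
\[
\int_{B_\alpha}\at_{l+1}\,\de\bfs \;\leq\; c\,e^{-(\theta-\delta)t}\int_{B_\alpha}\at_l\,\de\bfs + \widetilde{b}\abs{B_\alpha};
\]
combining this with $\widetilde{b}\abs{B_\alpha} \leq (C_0\widetilde{b}/M)\int_{B_\alpha}\at_l\,\de\bfs$ and choosing $M_0(t) := 2C_0\widetilde{b}\,e^{(\theta-\delta)t}/c$ absorbs the additive error and produces $\int_{B_\alpha}\at_{l+1}\,\de\bfs \leq \tfrac{3c}{2}e^{-(\theta-\delta)t}\int_{B_\alpha}\at_l\,\de\bfs$.

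Summing over $\mathcal{B}_l$ and setting $K_l := \int_{\bigcup\mathcal{B}_l}\at_l\,\de\bfs$, the inclusion $\bigcup\mathcal{B}_{l+1}\subset\bigcup\mathcal{B}_l$ gives the clean recursion $K_{l+1} \leq \tfrac{3c}{2}e^{-(\theta-\delta)t}K_l$ for $l \geq 1$, while an unconditional application of shadowing + contraction on all of $B_0$ at the initial step produces $K_1 \leq c\,e^{-(\theta-\delta)t}(C_0+1)M\abs{B_0}$ (using $\widetilde{b} \leq c\,e^{-(\theta-\delta)t}M$, which follows from our choice of $M_0(t)$). Iterating yields $K_N \leq C_1^N e^{-(\theta-\delta)Nt}M\abs{B_0}$ for a universal constant $C_1$ depending only on $c$ and $C_0$. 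Since $B_x(M,t,m;N)\cap B_0 \subset A_N^* \cap B_0 \subset \bigcup\mathcal{B}_N$ and $\at_N \geq M$ on $A_N^*$, Chebyshev's inequality gives $M\abs{B_x(M,t,m;N)\cap B_0} \leq K_N$, proving the claim with $c_0 := C_1$.

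The main obstacle, and the reason for the covering argument, is the need to absorb the additive $\widetilde{b}\abs{B_0}$ from the contraction hypothesis at every step. A naive iteration of shadowing + contraction on all of $B_0$ accumulates an error $\widetilde{b}\abs{B_0}/(M(1 - c\,e^{-(\theta-\delta)t}))$ that cannot be dominated by $c_0^N e^{-(\theta-\delta)Nt}\abs{B_0}$ uniformly in $N$ for any fixed $M = M_0(t)$, since once $c_0 e^{-(\theta-\delta)t} < 1$ the right-hand side decays exponentially while the error is constant in $N$. The conditioning step above circumvents this by restricting to scale-adapted sub-boxes on which the scale-adapted Lipschitz estimate forces $\at_l \geq M/C_0$, converting $\widetilde{b}\abs{B_\alpha}$ into at most $\tfrac{1}{2}c\,e^{-(\theta-\delta)t}\int_{B_\alpha}\at_l\,\de\bfs$ and producing a purely multiplicative recursion.
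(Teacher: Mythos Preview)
Your argument is essentially the one the paper defers to (\cite[Proposition~4.6]{Kha20}, modeled on \cite[Proposition~5.1]{KKLM17}): iterate the shadowing lemma together with the contraction hypothesis \eqref{eq:contraction hypothesis g_t} over scale-adapted sub-boxes that meet the excursion set, use the $U$-Lipschitz bound at the matching scale to convert the additive $\widetilde b\,|B_\alpha|$ into a multiplicative loss, and conclude by Chebyshev. The ingredients and their assembly match the cited proof.

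One technical slip worth flagging: the assertion that ``the scale-$e^{-(m+l+1)t}$ tessellation refines the scale-$e^{-(m+l)t}$ one'' is not automatic, since $e^t$ need not be an integer, and without it the inclusion $\bigcup\mathcal B_{l+1}\subset\bigcup\mathcal B_l$ (on which your recursion $K_{l+1}\le \tfrac{3c}{2}e^{-(\theta-\delta)t}K_l$ rests) can fail. The standard remedy is to construct the families recursively rather than independently at each scale: set $\mathcal B_0=\{B_0\}$ and obtain $\mathcal B_{l+1}$ by subdividing each box of $\mathcal B_l$ into $\lceil e^t\rceil^d$ congruent sub-boxes and retaining those that meet $A_{l+1}^*$. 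This forces the nesting by construction; the side lengths at level $l$ are then comparable to $e^{-(m+l)t}$ up to a fixed factor, so the scale-adapted Lipschitz constant $C_0$ and the shadowing hypothesis still apply, at the cost only of harmless adjustments to $C_0$ and $c_0$.
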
	

	\begin{proof}
		See the proof of \cite[Propopsition~4.6]{Kha20}. The only change needed is that we replace \cite[Lemma~4.5]{Kha20} with \Cref{lem:shadowing}. 
	\end{proof}

	\begin{cor}\label{cor:long excursion}
		There exists $C_2\geq 1$ such that the following holds. Suppose $M_0$ and $c_0$ are as in \Cref{prop:long excursion}, then for all $M>C_2M_0$, $t>0$ and positive integers $m,N\in\N$, the set $B_x(M,t,m;N)\cap B_0$ can be covered with $c_1^Ne^{(d-(\theta-\delta))Nt}$ balls of radius $e^{-(m+N)t}$, where $B_0$ is any ball of radius $e^{-mt}$ and $c_1=C_2c_0$.
	\end{cor}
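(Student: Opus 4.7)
The plan is to combine the measure estimate of \Cref{prop:long excursion} with the $H$-Lipschitz property of $\at$ from item (2) of \Cref{lem:dynamical height function} via a standard packing argument.

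First, to pass from the measure bound to a covering count, I would take $S$ to be a maximal $e^{-(m+N)t}$-separated subset of $B_x(M,t,m;N)\cap B_0$. Then the closed balls of radius $e^{-(m+N)t}$ centred at the points of $S$ cover $B_x(M,t,m;N)\cap B_0$, so it suffices to bound the cardinality $\abs{S}$. Moreover, the open balls of radius $e^{-(m+N)t}/2$ centred at $S$ are pairwise disjoint, and together they have total Lebesgue measure $\asymp \abs{S}\cdot e^{-d(m+N)t}$.

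Second, I would use the Lipschitz property to enclose a neighbourhood of $B_x(M,t,m;N)$ in a slightly relaxed excursion set. The key algebraic fact is that $g_t$ conjugates $u(\bfs)$ to $u(e^t\bfs)$, so
\[
g_{(m+l)t}u(\bfs')y_A = u\bigl(e^{(m+l)t}(\bfs'-\bfs)\bigr)\cdot g_{(m+l)t}u(\bfs)y_A.
\]
When $\norm{\bfs'-\bfs}\le e^{-(m+N)t}$ and $0\le l\le N$, the conjugating element lies in a fixed compact subset of $U\subset H$, independent of $m,N,t,l$. By Lipschitz continuity of $\at$, there is therefore a universal constant $C>0$ with $\at(g_{(m+l)t}u(\bfs')y_A)\in[C^{-1},C]\cdot\at(g_{(m+l)t}u(\bfs)y_A)$ for every such $l$. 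Consequently, the $e^{-(m+N)t}/2$-neighbourhood of $B_x(M,t,m;N)$, intersected with a bounded enlargement $B_0^+$ of $B_0$, is contained in the relaxed set
\[
\tilde B := \bigl\{\bfs\in B_0^+ : \at(g_{(m+l)t}u(\bfs)y_A)\ge M/C\text{ for }1\le l\le N\bigr\}.
\]

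Third, I would apply \Cref{prop:long excursion} to $\tilde B$. Inspecting its proof (and that of \cite[Prop.~4.6]{Kha20}), the inductive argument uses only the lower bound on $\at$ at the intermediate times $m+1,\ldots,m+N$; the upper bound at time $m$ enters only to initialise the induction through the finiteness of $\at(x)$, which still holds by hypothesis. Choosing $C_2\ge C$ and requiring $M>C_2 M_0$ ensures $M/C>M_0$, so the same argument yields $\abs{\tilde B}\ll c_0^N e^{-(\theta-\delta)Nt}\abs{B_0}$. Combining with the packing bound from step one, dividing by the volume of a ball of radius $e^{-(m+N)t}/2$, and using $\abs{B_0}\asymp e^{-dmt}$ gives $\abs{S}\ll c_0^N e^{(d-(\theta-\delta))Nt}$. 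Absorbing the implicit multiplicative constant into $c_1^N=(C_2c_0)^N$ (by enlarging $C_2$ if necessary) yields the stated bound.

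I expect the main subtle point to be the verification that the inductive proof of \Cref{prop:long excursion} is insensitive to the precise form of the upper bound at time $m$, so that it still controls the one-sided-threshold set $\tilde B$. As a safer alternative, one can instead analyse the two-sided-threshold set $\{\at(g_{mt}u(\bfs)y_A)<CM\text{ and }\at(g_{(m+l)t}u(\bfs)y_A)\ge M/C\text{ for }1\le l\le N\}$ and rerun the induction in \Cref{prop:long excursion} verbatim, which closes unchanged because both adjusted thresholds exceed $M_0$.
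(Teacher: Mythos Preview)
Your approach is correct and is precisely the standard deduction the paper is invoking when it cites \cite[Cor.~5.2]{KKLM17} and \cite[Cor.~4.7]{Kha20}: a maximal $e^{-(m+N)t}$-separated set, the $U$-Lipschitz property of $\at$ via the conjugation $g_{lt}u(\bfs)g_{-lt}=u(e^{lt}\bfs)$, and then the measure bound applied to a relaxed excursion set.

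One point to tighten: your first variant, where you drop the time-$m$ upper bound entirely and appeal only to $\at(x)<\infty$, does not close. In the inductive proof of \Cref{prop:long excursion} the initial bound $\at(g_{mt}u(\bfs)y_A)<M$ is not used merely for finiteness; it supplies the numerator in the Markov step that is then divided by the terminal threshold $M$, producing the factor $(ce^{-(\theta-\delta)t})^N$ independent of $M$. Without a quantitative bound at time $m$ one cannot control $\tilde B$. Your ``safer alternative'' with the two-sided relaxed thresholds $CM$ and $M/C$ is the right fix and is exactly what is done in the cited references: the induction runs unchanged with initial bound $CM$ and terminal threshold $M/C$, producing an extra constant factor $C^2$ that is absorbed into $c_1^N=(C_2c_0)^N$ for any $C_2\ge C^2$, and the requirement $M>C_2M_0$ guarantees both thresholds exceed $M_0$.
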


        \begin{proof}
            The deduction of the corollary from \Cref{prop:long excursion} is the same as the deduction of \cite[Corollary~5.2]{KKLM17} from \cite[Proposition~5.1]{KKLM17}, or  the deduction of \cite[Corollary~4.7]{Kha20} from \cite[Proposition~4.6]{Kha20}. 
        \end{proof}

	For any $M>0$, let $X_{\leq M}=\{y\in X:\at(y)\leq M\}$. Define
	\[
	Z(M,N,t,\eta) = \{ \bfs\in I^d : \#\{ 1\leq l\leq N:g_{lt}u(\bfs)y_A\notin X_{\leq M} \} > \eta N \}.
	\]
	
 We will prove our main theorem using the following covering result. 
 
	\begin{prop}\label{prop:covering}
		There exists $C_3\geq 1$ such that for all $t>0$, there exists $M_1=M_1(t)>0$ such that for all $M>M_1$, $\eta>0$, and $N\in\N$, the set $Z(M,N,t,\eta)$ can be covered by at most $C_3^Ne^{(d-\eta(\theta-\delta))Nt}$ balls of radius $e^{-Nt}$.
	\end{prop}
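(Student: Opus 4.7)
The plan is to partition $Z(M,N,t,\eta)$ by the pattern of ``bad'' times, cover each piece by iterating Corollary~\ref{cor:long excursion} along every maximal run of consecutive bad times, and sum the resulting bounds.

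First, set $M_1(t) := \max\bigl(C_2 M_0(t),\, \at(y_A) + 1\bigr)$, where $C_2, c_1, M_0$ are as in Corollary~\ref{cor:long excursion}; this guarantees $y_A \in X_{\leq M}$ whenever $M > M_1(t)$, which is needed to anchor the first run. For each $\sigma \subseteq \{1,\ldots,N\}$ set
\[
Z_\sigma := \bigl\{\bfs \in I^d : \at(g_{lt} u(\bfs) y_A) \geq M \text{ if and only if } l \in \sigma\bigr\}.
\]
Then $Z(M,N,t,\eta) = \bigsqcup_{|\sigma| > \eta N} Z_\sigma$, a disjoint union over at most $2^N$ subsets $\sigma$.

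Fix such a $\sigma$ and write its decomposition into maximal runs of consecutive integers as $R_1 < R_2 < \cdots < R_k$, with $R_j = [a_j,b_j]$. By maximality, $a_j - 1 \notin \sigma$ for every $j \geq 1$ (interpreting $0 \notin \sigma$); thus for every $\bfs \in Z_\sigma$ the anchor condition $\at(g_{(a_j - 1)t} u(\bfs) y_A) < M$ holds at the start of each run (for $a_1 = 1$ this uses $\at(y_A) < M$). Starting from the trivial cover of $I^d$ by one ball, process the time steps $1,\ldots,N$ in order: at each gap step $l \notin \sigma$, refine each current ball (of radius $e^{-(l-1)t}$) trivially into at most $C_0 e^{dt}$ balls of radius $e^{-lt}$, where $C_0=C_0(d)$ is an absolute constant; at the start of each run $R_j$, apply Corollary~\ref{cor:long excursion} with $m = a_j - 1$ and $N = |R_j|$ to each ball $B_0$ of radius $e^{-(a_j-1)t}$, which covers $Z_\sigma \cap B_0 \subseteq B_x(M,t,a_j - 1;|R_j|) \cap B_0$ by at most $c_1^{|R_j|} e^{(d-(\theta-\delta))|R_j|t}$ balls of radius $e^{-b_j t}$.

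Multiplying the per-step factors over all $N-|\sigma|$ gap steps and all runs, $Z_\sigma$ admits a cover by balls of radius $e^{-Nt}$ of cardinality at most
\[
C_0^{N-|\sigma|}\, c_1^{|\sigma|}\, e^{dNt - (\theta-\delta)|\sigma|t} \leq \max(C_0,c_1)^N\, e^{(d - \eta(\theta-\delta))Nt},
\]
using $|\sigma| > \eta N$. Summing over the at-most $2^N$ admissible $\sigma$ and setting $C_3 := 2\max(C_0, c_1)$ yields the desired bound. The only delicate bookkeeping is the anchor condition at the start of each run; this is handled automatically by partitioning via $\sigma$ and splitting into maximal runs, since then the step just before each run is forced to lie in $X_{\leq M}$, so Corollary~\ref{cor:long excursion} applies without adjustment.
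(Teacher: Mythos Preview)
Your argument is correct and is precisely the standard deduction of this covering statement from Corollary~\ref{cor:long excursion} carried out in \cite[Theorem~1.5]{KKLM17} and \cite[Proposition~4.8]{Kha20}, which is all the paper invokes. One small correction: when $a_1=1$ the anchor condition at $m=0$ is $\at\bigl(u(\bfs)y_A\bigr)<M$, not $\at(y_A)<M$; since $\{u(\bfs):\bfs\in I^d\}$ is compact in $H$, the Lipschitz property of $\at$ from \Cref{lem:dynamical height function}(2) gives a constant $C\geq 1$ with $\sup_{\bfs\in I^d}\at(u(\bfs)y_A)\leq C\,\at(y_A)<\infty$, so replacing $\at(y_A)+1$ by $C\,\at(y_A)+1$ in your definition of $M_1(t)$ repairs this.
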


 \begin{proof} As in \cite[Proposition~4.8]{Kha20} or \cite[Theorem~1.5]{KKLM17}, the result is a consequence of \Cref{cor:long excursion}. 
 \end{proof}

\begin{remark} [Lipschitz property] By \Cref{lem:dynamical height function}(2), the Margulis height function $\at$ used in this article is Lipschitz with respect to the action of $H$, but not with respect to the action of $\{b_t\}$. We note that in both \cite{KKLM17} and \cite{Kha20}, the proofs of the results quoted above to obtain \Cref{prop:long excursion}, \Cref{cor:long excursion}, and \Cref{prop:covering}, the Lipschitz property of the Margulis height function was used only with respect to the action of the unipotent subgroup $U=\{u(\bfs):\bfs\in\R^d\}\subset H$, but not with respect to the action of $\{g_t\}$.
\end{remark}
	
	\begin{proof}[Proof of \Cref{thm:main_div_on_avg}]
	As in the proof of \cite[Theorem 1.1]{KKLM17} or \cite[Theorem 4.3]{Kha20}, \Cref{prop:covering} implies that the Hausdorff dimension of the set 
		\begin{equation}\label{eq:set of div_on_avg1}
		\{ \bfs\in\R^d \colon u(\bfs)y_A \text{ is $g_t$-divergent on average}\}
		\end{equation}
		is at most $d-(\theta-\delta) = \frac{d^2}{d+1}+\rho(y_A)+\eps_1+\eps_2$. In view of \eqref{eq:z_A}, since $z_A$ commutes with $g_t$, $x\in X_{n+1}$ is $g_t$-divergent on average if and only if $z_Ax$ is so. Hence, in view of \eqref{eq:basic identity}, the set in \eqref{eq:set of div_on_avg1} coincides with the set $E_A$ defined in \eqref{eq:set of div_on_avg}. Finally, since $\eps_1>0$ is arbitrary, \eqref{eq:E_A} follows from \Cref{prop:relating Diophantine exponents}. 
	\end{proof}

	%\bibliographystyle{alpha}
	%bibliography{references}

\begin{thebibliography}{KKLM17}

\bibitem[BQ12]{BQ12}
Yves Benoist and Jean-Francois Quint.
\newblock Random walks on finite volume homogeneous spaces.
\newblock {\em Invent. Math.}, 187(1):37--59, 2012.

\bibitem[CC16]{CC16}
Yitwah Cheung and Nicolas Chevallier.
\newblock Hausdorff dimension of singular vectors.
\newblock {\em Duke Math. J.}, 165(12):2273--2329, 2016.

\bibitem[Che11]{Che11}
Yitwah Cheung.
\newblock Hausdorff dimension of the set of singular pairs.
\newblock {\em Ann. of Math. (2)}, 173(1):127--167, 2011.

\bibitem[Dan85]{Dan85}
S.~G. Dani.
\newblock Divergent trajectories of flows on homogeneous spaces and {D}iophantine approximation.
\newblock {\em J. Reine Angew. Math.}, 359:55--89, 1985.

\bibitem[DFSU23]{das2023variational}
Tushar Das, Lior Fishman, David Simmons, and Mariusz Urbański.
\newblock A variational principle in the parametric geometry of numbers, 2023.

\bibitem[EMM98]{EMM98}
Alex Eskin, Gregory Margulis, and Shahar Mozes.
\newblock Upper bounds and asymptotics in a quantitative version of the {O}ppenheim conjecture.
\newblock {\em Ann. of Math. (2)}, 147(1):93--141, 1998.

\bibitem[Kha20]{Kha20}
Osama Khalil.
\newblock Bounded and divergent trajectories and expanding curves on homogeneous spaces.
\newblock {\em Trans. Amer. Math. Soc.}, 373(10):7473--7525, 2020.

\bibitem[Khi26]{Khintchine1926}
A.~Khintchine.
\newblock {Zur metrischen Theorie der diophantischen Approximationen}.
\newblock {\em Math. Zeitschrift}, 24(1):706--714, dec 1926.

\bibitem[KKLM17]{KKLM17}
S.~Kadyrov, D.~Kleinbock, E.~Lindenstrauss, and G.~A. Margulis.
\newblock Singular systems of linear forms and non-escape of mass in the space of lattices.
\newblock {\em J. Anal. Math.}, 133:253--277, 2017.

\bibitem[KP22]{kim2022lower}
Taehyeong Kim and Jaemin Park.
\newblock On a lower bound of hausdorff dimension of weighted singular vectors, 2022.

\bibitem[LSST20]{LSST20}
Lingmin Liao, Ronggang Shi, Omri Solan, and Nattalie Tamam.
\newblock Hausdorff dimension of weighted singular vectors in {$\mathbb R^2$}.
\newblock {\em J. Eur. Math. Soc. (JEMS)}, 22(3):833--875, 2020.

\bibitem[Shi20]{Shi20}
Ronggang Shi.
\newblock Pointwise equidistribution for one parameter diagonalizable group action on homogeneous space.
\newblock {\em Trans. Amer. Math. Soc.}, 373(6):4189--4221, 2020.

\end{thebibliography}
\end{document}